\theoremstyle{plain}
\newtheorem{theorem}{Theorem}
\newtheorem{mtheorem}{Main Theorem}
\newtheorem{lemma}{Lemma}
\newtheorem{corollary}{Corollary}
\newtheorem{proposition}{Proposition}
\theoremstyle{definition}
\newtheorem{definition}{Definition}
\newtheorem{example}{Example}
\newcommand{\A}{{\mathcal A}}
\newcommand{\B}{{\mathcal B}}
\newcommand{\C}{{\mathcal C}}
\newcommand{\bC}{{\boldsymbol C}}
\newcommand{\bz}{{\Bbb Z}}
\newcommand{\su}{{\sf u}}
\newcommand{\sv}{{\sf v}}
\newcommand{\vv}{{\overline{v}}}
\newcommand{\vw}{{\overline{w}}}
\begin{document}
\onecolumn
\pagestyle{empty}

\title{Linear Size Constant-Composition Codes Meeting the Johnson Bound}

\author{Yeow~Meng~Chee,~{\it Senior~Member,~IEEE},~and~Xiande~Zhang
        \thanks{Y. M. Chee ({\tt ymchee@ntu.edu.sg})
        is with Division of Mathematical Sciences, School of
Physical and Mathematical Sciences, Nanyang Technological
University, Singapore 637371.}

\thanks{X. Zhang ({\tt drzhangx@ustc.edu.cn}) is with School of Mathematical Sciences,
University of Science and Technology of China, Hefei, 230026, Anhui, China. Part of this work was done while X. Zhang was with the Division of Mathematical
Sciences, School of Physical and Mathematical Sciences, Nanyang
Technological University, 21 Nanyang Link, Singapore 637371. The research of X. Zhang is supported by NSFC under grant 11301503.}
\thanks{Copyright (c) 2012 IEEE. Personal use of this material is permitted.  }
}


\maketitle

\begin{abstract}
The Johnson-type upper bound on the maximum size of a code of length $n$, distance $d=2w-1$ and constant composition $\vw$ is $\lfloor\dfrac{n}{w_1}\rfloor$, where $w$ is the total weight and $w_1$ is the largest component of $\vw$.
Recently, Chee {\em et al.} proved that this upper bound can be achieved for all constant-composition codes  of
sufficiently large lengths. Let $N_{ccc}(\vw)$ be the smallest such length. The determination of $N_{ccc}(\vw)$ is trivial for binary codes. This paper provides a lower bound on  $N_{ccc}(\vw)$, which is shown to be tight for all ternary and quaternary codes by giving new combinatorial constructions. Consequently, by refining method, we determine the values of $N_{ccc}(\vw)$ for all $q$-ary constant-composition codes provided that $3w_1\geq w$ with finite possible exceptions.
\end{abstract}


\section{Introduction}

Constant-composition codes have attracted a lot attention \cite{Svanstrom:2000,Cheeetal:2010a,Svanstrometal:2002,ChuColbournDukes:2006,ding2005combinatorial,luo2003constant,ding2005family,bounds2005algebraic,DingYin:2006,chee2007pbd,bogdanova2003enumeration,ding2008optimal,Huczynska:2010,CGL:2008,yan2009class,gao2011optimal,yin2008new,ding2008construction,zhu2012quaternary,wei2015optimal,chee2014decompositions} in recent years due to their vast applications, such as in determining the zero error decision feedback
capacity of discrete memoryless channels \cite{moulin2012log,scarlett2014refinements}, multiple-access
communications \cite{dyachkov1984random,scarlett2015second}, spherical codes for modulation \cite{ericson1995spherical}, DNA
codes \cite{king2003bounds,chee2008improved}, powerline communications \cite{chu2004constructions,colbourn2004permutation}, and frequency
hopping \cite{ChuColbournDukes:2006}.

Although constant-composition codes have been used since the early 1980s to bound error and erasure probabilities in decision
feedback channels \cite{csiszar2011information}, their systematic study only began in late
1990s with Svanstr\"om \cite{Svanstrom:1999b}. Nowadays, the problem of determining
the maximum size of a constant-composition code constitutes a central problem in their study due to their close relations to combinatorial design theory \cite{Cheeetal:2010a,Svanstrometal:2002,ChuColbournDukes:2006,ding2005combinatorial,ding2005family,DingYin:2006,chee2007pbd,CGL:2008,yan2009class,gao2011optimal,yin2008new,zhu2012quaternary,wei2015optimal}.

For integers $m\leq n$,
the set of integers $\{m,m+1,\ldots,n\}$ is denoted by $[m,n]$. When $m=1$, the
set $[1,n]$ is further abbreviated to $[n]$. If $m>n$, then $[m,n]$ is defined to be empty.
The ring $\bz/n\bz$ is denoted by $\bz_n$. 
For finite sets $R$ and $X$, $R^X$ denotes the set of vectors
of length $|X|$, where each component of a vector $\su\in R^X$ has
value in $R$ and is indexed by an element of $X$, that is,
$\su=(\su_x)_{x\in X}$, and $\su_x\in R$ for each $x\in X$.

A {\em $q$-ary code of length $n$} is a set $\C\subseteq \bz_q^X$,
for some $X$ of size $n$. The elements of $\C$ are called {\em
codewords}. The {\em support} of a vector $\su\in
\bz_q^X$ is
${\rm supp}(\su)=\{x\in X : \su_x \neq 0\}$. The {\em Hamming weight} of a
vector $\su\in \bz_q^X$ is defined as $\|\su\|=|{\rm supp}(\su)|$. The distance induced by this weight is the
{\em Hamming distance}, denoted by $d_H(\cdot,\cdot)$, so that $d_H(\su,\sv) =
\|\su-\sv\|$, for $\su,\sv\in \bz_q^X$.

A code $\C$ is said to have {\em distance} $d$ if
$d_H(\su,\sv)\geq d$ for all distinct $\su,\sv\in \C$. The {\em composition} of a vector $\su\in \bz_q^X$ is the tuple $\vw=\llbracket w_1,\ldots,w_{q-1} \rrbracket$, where $w_i=|\{x\in X: \su_x=i\}|$, $i\in \bz_q\setminus\{0\}$. A code $\C$ is said to have
{\em constant weight} $w$ if every codeword in $ \C$ has weight $w$, and have {\em constant composition} $\vw$ if every codeword  has composition $\vw$.  Hence, every constant-composition code is a constant-weight code. In this paper, attention is restricted to constant-composition codes. For constant-weight codes, interested readers are referred to \cite{SHP2006EJC}.

A $q$-ary code of
length $n$, distance $d$, and constant composition $\vw$ is denoted an
$(n,d,\vw)_q$-code.  The maximum size of an
$(n,d,\vw)_q$-code is denoted $A_q(n,d,\vw)$, and an $(n,d,\vw)_q$-code
attaining the maximum  size is said to be {\em optimal}. In an $(n,d,\vw)_q$-code, reordering the components of $\vw$ or deleting zero components of $\vw$ will not affect the distance and composition properties. Hence, through out this paper, when we talk about a composition $\vw=\llbracket w_1,w_2\ldots,w_{q-1} \rrbracket$, we always assume that all components are positive and listed in non-increasing order, that is, $w_1\geq w_2\geq \cdots \geq w_{q-1}\geq 1$. For succinctness, define the total weight $w:=\sum_{i=1}^{q-1}w_i$.

The  Johnson-type bound of Svanstr\"{o}m  for ternary constant-composition codes \cite{Svanstrom:2000}
could be easily extended to the following (see also \cite{ChuColbournDukes:2006}).

\begin{proposition}\label{johnsonbound} (Johnson Bound):
\begin{align*}
A_q(n,d,\llbracket w_1,&w_2\ldots,w_{q-1} \rrbracket)\leq\\& \left\lfloor \frac{n}{w_1}
A_q(n-1,d,\llbracket w_1-1,w_2,\ldots,w_{q-1} \rrbracket)\right\rfloor.
\end{align*}
\end{proposition}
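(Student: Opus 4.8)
The plan is to prove the Johnson bound by a standard double-counting argument on the symbol $1$ (or more precisely, on one of the symbols achieving the largest component $w_1$), mirroring the classical Johnson bound for binary and constant-weight codes. Let $\C$ be an $(n,d,\llbracket w_1,\ldots,w_{q-1}\rrbracket)_q$-code on coordinate set $X$ with $|X|=n$ and $|\C|=A_q(n,d,\llbracket w_1,\ldots,w_{q-1}\rrbracket)$. Since all codewords have composition $\vw$, each codeword $\su\in\C$ has exactly $w_1$ coordinates $x$ with $\su_x=1$. For each coordinate $x\in X$, let $\C_x=\{\su\in\C:\su_x=1\}$ be the set of codewords carrying symbol $1$ in position $x$. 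Double-counting pairs $(x,\su)$ with $\su_x=1$ gives $\sum_{x\in X}|\C_x| = w_1|\C|$, so there exists a coordinate $x^\ast$ with $|\C_{x^\ast}|\geq \lceil w_1|\C|/n\rceil$, i.e. $|\C_{x^\ast}|\geq w_1|\C|/n$.

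The key step is then a residual-code (shortening) construction: delete the coordinate $x^\ast$ from every codeword in $\C_{x^\ast}$, obtaining a set $\C'$ of vectors in $\bz_q^{X\setminus\{x^\ast\}}$. Each such truncated codeword previously had a $1$ in position $x^\ast$, so after deletion its composition is exactly $\llbracket w_1-1,w_2,\ldots,w_{q-1}\rrbracket$ (with any now-zero or equal components reordered into non-increasing order as per the paper's convention). Moreover, for distinct $\su,\sv\in\C_{x^\ast}$ we have $\su_{x^\ast}=\sv_{x^\ast}=1$, so $d_H$ restricted to $X\setminus\{x^\ast\}$ equals the original Hamming distance, which is at least $d$; hence distinct truncations remain distinct and $\C'$ is an $(n-1,d,\llbracket w_1-1,w_2,\ldots,w_{q-1}\rrbracket)_q$-code. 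Therefore
\[
\frac{w_1|\C|}{n}\leq |\C_{x^\ast}| = |\C'| \leq A_q(n-1,d,\llbracket w_1-1,w_2,\ldots,w_{q-1}\rrbracket).
\]
Rearranging yields $|\C| \leq \frac{n}{w_1}A_q(n-1,d,\llbracket w_1-1,w_2,\ldots,w_{q-1}\rrbracket)$, and since $|\C|$ is an integer, $|\C|\leq\left\lfloor\frac{n}{w_1}A_q(n-1,d,\llbracket w_1-1,w_2,\ldots,w_{q-1}\rrbracket)\right\rfloor$, which is the claim.

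The only delicate point—and the main thing to check carefully rather than a genuine obstacle—is the bookkeeping in the residual code: one must confirm that deleting a coordinate that is known to carry the symbol $1$ decrements precisely the first component $w_1$ and leaves the others untouched, and that after re-sorting into non-increasing order the resulting tuple is still the one appearing on the right-hand side (this is why the bound naturally distinguishes the \emph{largest} component $w_1$: choosing the symbol with the largest composition count maximizes $w_1|\C|/n$ and gives the strongest pigeonhole conclusion). One should also note the edge case $w_1=1$, where the residual composition $\llbracket w_1-1,w_2,\ldots\rrbracket$ has a zero leading entry that is simply dropped, consistent with the paper's stated convention; the inequality chain above is unaffected.
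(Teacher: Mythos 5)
Your proof is correct. Note that the paper itself gives no proof of this proposition---it is stated as an easy extension of Svanstr\"om's ternary bound, with the argument deferred to the cited references---and your shortening-plus-averaging argument (count the incidences of symbol $1$, pick a coordinate $x^\ast$ carried by at least $w_1|\C|/n$ codewords, delete it to get an $(n-1,d,\llbracket w_1-1,w_2,\ldots,w_{q-1}\rrbracket)_q$-code, then take the floor) is exactly the standard Johnson-type argument those references use, with all the relevant details (distance preserved since the deleted coordinate agrees on all codewords of $\C_{x^\ast}$, composition decremented only in the first component, invariance under reordering) handled correctly.
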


\begin{definition} Let $q>q'$ be two positive integers. A composition $\vw=\llbracket w_1,\ldots,w_{q} \rrbracket$  is a {\em refinement} of $\vv=\llbracket v_1,\ldots,v_{q'} \rrbracket$ if there exist pairwise disjoint sets $S_1,\ldots,S_{q'}\subset [q]$ satisfying $\cup_{j\in [{q'}]}S_j=[q]$, such that $\sum_{i\in S_j}w_i=v_j$ for each $j\in [{q'}]$.
\end{definition}

Chu {\em et al.} \cite{ChuColbournDukes:2006} made the following observation.

\begin{lemma}\label{refine} If $\vw$ is a refinement of $\vv$, then $A_q(n,d,\vw)\geq A_q(n,d,\vv)$.
\end{lemma}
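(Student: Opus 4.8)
The plan is to prove this by a symbol-substitution (relabeling) argument: starting from a code realizing $A_q(n,d,\vv)$, I would replace each nonzero symbol $j$ by the symbols indexed by $S_j$, doing so carefully enough that the new code has composition $\vw$, the same cardinality, and distance still at least $d$. For bookkeeping I would first set $S_0:=\{0\}$ and extend the notation by $v_0:=n-w$ and $w_0:=n-w$, so that $S_0,S_1,\ldots,S_{q'}$ partition $\{0,1,\ldots,q\}$ and $\sum_{i\in S_j}w_i=v_j$ holds for every $j\in\{0,1,\ldots,q'\}$.

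Now let $\C$ be an $(n,d,\vv)$-code with $|\C|=A_q(n,d,\vv)$; we may assume $\C\neq\emptyset$. For a codeword $\su\in\C$ and an index $j\in\{0,\ldots,q'\}$, put $P_j(\su)=\{x\in X:\su_x=j\}$, so that $|P_j(\su)|=v_j=\sum_{i\in S_j}w_i$ by the constant-composition property. For each $\su\in\C$ \emph{independently}, I would choose an arbitrary partition of each $P_j(\su)$ into blocks $(P_{j,i}(\su))_{i\in S_j}$ with $|P_{j,i}(\su)|=w_i$ for all $i\in S_j$ --- this is possible precisely because these target sizes sum to $|P_j(\su)|$ --- and then define $\su'$ by putting $\su'_x=i$ whenever $x\in P_{j,i}(\su)$. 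Let $\C'=\{\su':\su\in\C\}$.

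Finally I would verify the three required properties in order. (i) \emph{Composition:} by construction exactly $w_i$ coordinates of $\su'$ equal $i$ for each $i\in[q]$, so $\C'$ has constant composition $\vw$. (ii) \emph{Cardinality:} from $\su'$ one recovers $\su$ by sending each coordinate value $i$ back to the unique index $j$ with $i\in S_j$ (well defined since the $S_j$ are pairwise disjoint and cover $\{0,\ldots,q\}$), so $\su\mapsto\su'$ is injective and $|\C'|=|\C|$. (iii) \emph{Distance:} if $\su,\sv\in\C$ and $\su_x\neq\sv_x$, write $\su_x=j$ and $\sv_x=j'$ with $j\neq j'$; then $\su'_x\in S_j$ and $\sv'_x\in S_{j'}$, and $S_j\cap S_{j'}=\emptyset$ forces $\su'_x\neq\sv'_x$. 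Hence $\supp(\su'-\sv')\supseteq\supp(\su-\sv)$, so $d_H(\su',\sv')\geq d_H(\su,\sv)\geq d$ for all distinct $\su,\sv\in\C$, and $\C'$ is an $(n,d,\vw)$-code. Therefore $A_q(n,d,\vw)\geq|\C'|=A_q(n,d,\vv)$. I do not expect a genuine obstacle here; the one point worth making explicit is step (iii): splitting a symbol only \emph{within} a disjoint class $S_j$ can create new disagreements between two codewords but can never destroy an existing one, which is exactly why a refinement gives only an inequality and why the block partitions may be chosen separately for each codeword.
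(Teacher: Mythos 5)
Your proof is correct: splitting each symbol class $P_j(\su)$ into parts of sizes $w_i$, $i\in S_j$ (independently per codeword), preserves cardinality and can only enlarge $\supp(\su-\sv)$, so the distance is maintained and the new code has composition $\vw$. The paper itself gives no proof --- it records the lemma as an observation of Chu \emph{et al.} \cite{ChuColbournDukes:2006} --- and your relabeling argument is exactly the standard one behind that observation, so there is nothing substantive to compare.
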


In \cite{Cheeetal:2010a}, Chee {\em et al.} showed that $A_q(n,d,\vw)=O(n)$ if and only if $d\geq 2w-1$. For $d\geq 2w$, it is trivial to determine values of $A_q(n,d,\vw)$. For $d=2w-1$, \[A_q(n,2w-1,\vw)\leq \left\lfloor\dfrac{n}{w_1}\right\rfloor\] for all $\vw$ by Proposition~\ref{johnsonbound}. When $q=2$, we know that $A_2(n,2w-1,\llbracket w_1 \rrbracket) =\lfloor\dfrac{n}{w_1}\rfloor$, trivially. When $q=3$, the values of $A_3(n,2w-1,\vw)$ has been completely determined by Svanstr\"{o}m {\em et al.} \cite{Svanstrometal:2002}.  Besides this, the following asymptotic statement  was proved in \cite{Cheeetal:2010a}.

\begin{theorem}[Chee {\em et al.} \cite{Cheeetal:2010a}]\label{asy} Let $\vw=\llbracket w_1,w_2\ldots,w_{q-1} \rrbracket$. Then $A_q(n,2w-1,\vw)=
\lfloor\dfrac{n}{w_1}\rfloor$ for all sufficiently large $n$.
\end{theorem}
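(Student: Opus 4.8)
The plan is as follows. The upper bound $A_q(n,2w-1,\vw)\le\lfloor n/w_1\rfloor$ is already in hand: apply Proposition~\ref{johnsonbound} once and note that a constant-weight-$(w-1)$ code with distance $2w-1>2(w-1)$ has at most one codeword, so the inner term is $1$. Everything therefore reduces to producing, for every sufficiently large $n$, an $(n,2w-1,\vw)_q$-code of size $M:=\lfloor n/w_1\rfloor$. I would first record the combinatorial reformulation obtained by separating the support of each codeword from its symbol pattern: a set of vectors of composition $\vw$ has minimum distance $2w-1$ if and only if their supports are $w$-subsets of $[n]$ meeting pairwise in at most one point, and whenever two supports share a point the corresponding codewords carry different symbols there; equivalently, for each symbol $\ell\in[q-1]$ the families of $\ell$-labelled coordinate sets are pairwise disjoint. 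From the last description, $M$ such codewords force $Mw_\ell\le n$ for every $\ell$, the binding constraint being $\ell=1$ (this is the source of the Johnson bound), and the construction must make it tight.

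For the range $2w_1\ge w$ (equivalently $w_1\ge w-w_1$) the target follows immediately from Lemma~\ref{refine}: $\vw$ is a refinement of the ternary composition $\llbracket w_1,\,w-w_1\rrbracket$, whose largest part is $w_1$, so $A_q(n,2w-1,\vw)\ge A_3(n,2w-1,\llbracket w_1,\,w-w_1\rrbracket)$, and the right-hand side equals $\lfloor n/w_1\rfloor$ for all large $n$ by the complete determination of ternary constant-composition codes in~\cite{Svanstrometal:2002} (the case $w-w_1=0$ being binary and trivial). As an alternative one may give an explicit ``two-layer'' construction: partition $[n]$ into $M$ groups of size $w_1$, declare the $1$-labelled coordinates of codeword $i$ to be the $i$-th group, and place its remaining $w-w_1\le w_1$ coordinates one per group in a rotational pattern, choosing the coordinates within the groups by a circulant rule so that any two supports meet in at most one point and then necessarily in distinct symbols.

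The substantive case is $w>2w_1$, which forces $q\ge4$, and here the main obstacle lies. Now the supports cannot be organised into boundedly many layers over the groups and must overlap one another heavily, while still meeting pairwise in at most one point and never agreeing in symbol there; keeping these constraints compatible with density $1/w_1$ is the heart of the matter, and a single cyclic base block will not suffice, since a base support whose internal differences are all distinct is far too spread out to pack at rate $1/w_1$. The plan is to view the object sought as a resolvable packing — a resolution of $[n]$ into $M$ groups of size $w_1$, together with an extension of each group to a block of size $w$ by adjoining $w_\ell$ points from outside the group for each $\ell\ge2$, so that distinct blocks meet in at most one point and nowhere carry a common symbol — and then to realize such configurations for all large $n$ by the standard route: construct finitely many ingredient designs by ad hoc and algebraic means, propagate them with recursive constructions (group divisible designs, frames, transversal designs) in the spirit of Wilson-type asymptotic existence, and finish with a short ``filling'' argument that repairs the remainder and parity obstructions so that $\lfloor n/w_1\rfloor$ is attained for \emph{every} sufficiently large $n$, not merely for $n$ in convenient congruence classes. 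The delicate points are the construction of the ingredient configurations and the verification that each recursive step preserves the colouring condition; this is where the bulk of the work will lie.
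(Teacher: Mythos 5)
Your upper bound is fine (one application of Proposition~\ref{johnsonbound} plus the observation that weight-$(w-1)$ codewords are pairwise at distance at most $2w-2$), and the case $2w_1\ge w$ is correctly disposed of by refining the ternary composition $\llbracket w_1,\,w-w_1\rrbracket$ via Lemma~\ref{refine} and invoking the exact ternary formula of Svanstr\"om et al. One small slip in your reformulation: ``for each symbol $\ell$ the $\ell$-labelled coordinate sets are pairwise disjoint'' is equivalent only to condition (C2); it does not imply (C1) (two supports could meet in two points with different symbols at both, giving distance $2w-2$), so it is not an equivalent description of distance $2w-1$ and cannot replace the pair of conditions.

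The genuine gap is the case $w>2w_1$, which is the entire substance of the theorem: there you give a program (resolvable packings, GDD/frame/TD recursions, Wilson-type asymptotic existence, a filling step) but construct no ingredient designs and do not verify that any recursive step preserves the symbol condition, so nothing is actually proved. Moreover the program is aimed in a harder direction than necessary, and your reason for abandoning cyclic constructions (``a single cyclic base block will not suffice'') misses the standard fix: use $w_1$ base blocks over $\bbZ_M$ with $M=\lfloor n/w_1\rfloor$, of sizes $\lambda=\lceil w/w_1\rceil$ and $\lambda-1$, whose lists of differences are pairwise disjoint (a generalized difference packing with block sizes at most $q-1$), develop each block cyclically and let the coordinate of a point within its ordered block determine the symbol, exactly as in Propositions~\ref{pac2code} and~\ref{diffpack} of this paper (see Section~\ref{combcons} and Table~\ref{table1}). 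Such packings exist for all sufficiently large $M$ by a greedy choice of base blocks avoiding previously used differences (or from known difference families, cf.\ Corollary~\ref{gpdf}), which yields $\lfloor n/w_1\rfloor$ codewords whenever $w_1\mid n$, and arbitrary large $n$ follows by appending zero coordinates. This multi-base-block orbit construction is essentially how the cited result of Chee et al.\ is obtained and is what produces the explicit quadratic bound $N_{ccc}(\vw)\le 4w_1(w-1)^2+1$ of Proposition~\ref{bound}; until you either carry it out or complete your recursive program with concrete ingredients and verified colouring-preserving recursions, the main case of the theorem remains unproven.
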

\subsection{Problem Status and Contribution}

In Theorem~\ref{asy}, the hypothesis that $n$ is sufficiently large must be satisfied. But how large must $n$ be? More precisely, for a composition $\vw=\llbracket w_1,w_2\ldots,w_{q-1} \rrbracket$,  let
\begin{align*}
&N_{ccc}(\vw)=\min\{n_0\in {\Bbb N}:A_q(n,2w-1,\vw)=
\left\lfloor\dfrac{n}{w_1}\right\rfloor \text{ for all }n\geq n_0\},
\end{align*}
which was first defined in  \cite{Cheeetal:2010a}. For binary codes, it is trivial that $N_{ccc}(\llbracket w_1\rrbracket )=1$. Explicit bounds on $N_{ccc}(\vw)$ for general $\vw$ were given in \cite{Cheeetal:2010a}.

\begin{proposition}
\label{bound} For any composition $\vw$, we have \[w^2-w_1(w-1)\leq N_{ccc}(\vw)\leq 4w_1(w-1)^2+1.\]
\end{proposition}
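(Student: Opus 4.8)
The plan is to establish the two inequalities separately. For the lower bound $N_{ccc}(\vw)\geq w^2-w_1(w-1)$, I would argue by a counting/packing obstruction: it suffices to exhibit, for every $n$ with $n < w^2 - w_1(w-1)$, a proof that $A_q(n,2w-1,\vw) < \lfloor n/w_1 \rfloor$, so that such $n$ cannot serve as a valid $n_0$. Set $n = w^2 - w_1(w-1) - 1 = (w-1)(w-w_1) + (w_1 - 1) \cdot 1$ — actually the cleanest route is to take the largest $n$ for which $\lfloor n/w_1\rfloor \geq w$ but a size-$w$ code is impossible. The key structural fact is: in an $(n,2w-1,\vw)_q$-code, any two distinct codewords $\su,\sv$ satisfy $d_H(\su,\sv)\geq 2w-1$, and since each has weight $w$, their supports overlap in at most one position where they moreover must agree; more precisely $|\supp(\su)\cap\supp(\sv)|\leq 1$. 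Hence a code of size $M$ has supports forming a ``near-packing'': counting incidences, $\sum$ of pairwise support-intersections is small, and a Fisher-type inequality forces $n \geq$ (something quadratic in $w$ when $M$ is moderately large). Carrying the inclusion–exclusion for $M = \lceil (w^2-w_1(w-1))/w_1\rceil$ codewords and showing the positions cannot fit into fewer than $w^2-w_1(w-1)$ coordinates gives the bound; the arithmetic is routine once the overlap-$\leq 1$ claim is in hand.

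For the upper bound $N_{ccc}(\vw)\leq 4w_1(w-1)^2+1$, the strategy is an explicit or greedy construction together with the recursive structure provided by Proposition~\ref{johnsonbound} and the refinement monotonicity of Lemma~\ref{refine}. By Lemma~\ref{refine}, $A_q(n,2w-1,\vw)\geq A_q(n,2w-1,\vv)$ whenever $\vw$ refines $\vv$; taking $\vv = \llbracket w_1, w - w_1 \rrbracket$ (or even iterating down to a composition with the same largest part $w_1$), it suffices to handle compositions with few parts, and ultimately to produce a code of size exactly $\lfloor n/w_1\rfloor$ for all $n \geq 4w_1(w-1)^2+1$. The concrete construction: partition the $n$ coordinates greedily into $\lfloor n/w_1\rfloor$ blocks, each of size $w_1$ plus slack, and on each block place a codeword whose support is (almost) that block, choosing the nonzero values to realize composition $\vw$; the distance $2w-1$ is automatic once supports pairwise intersect in at most one coordinate. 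The threshold $4w_1(w-1)^2+1$ arises from ensuring enough ``fresh'' coordinates remain at each greedy step to avoid forbidden overlaps — each placed codeword blocks roughly $(w-1)^2$ coordinate-pairs, and the factor $4w_1$ is the cushion needed so the greedy never stalls before reaching $\lfloor n/w_1\rfloor$ codewords.

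The main obstacle I anticipate is the upper-bound construction, specifically controlling the interaction between the composition constraint (which values appear, and how many times) and the support-packing constraint simultaneously. Meeting the Johnson bound with equality means the construction must be essentially perfect — every coordinate used almost optimally — so a naive greedy argument will overshoot the threshold or fail to realize the exact composition $\vw$ on every block. I expect the real work is a careful bookkeeping argument (perhaps dividing coordinates into a ``main'' region processed by a clean combinatorial design and a ``correction'' region of size $O(w_1 w^2)$ used to fix up compositions), showing that the design on the main region can always be extended. The lower bound, by contrast, should follow from a short double-counting argument exploiting the overlap-$\leq 1$ property, and I would present it first as a warm-up before the more delicate construction.
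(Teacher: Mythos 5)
There are genuine gaps in both halves. First, your key structural fact is misstated: in an $(n,2w-1,\vw)_q$-code two codewords whose supports share a coordinate must \emph{disagree} there, not agree (if they agreed, the distance would be only $2w-2$). This is exactly condition (C2) in the paper, alongside (C1) that supports meet in at most one coordinate, and it makes your later claim that ``the distance $2w-1$ is automatic once supports pairwise intersect in at most one coordinate'' false. Second, the lower-bound logic is off: showing $N_{ccc}(\vw)\geq n^*$ does not require (and cannot have) $A_q(n,2w-1,\vw)<\lfloor n/w_1\rfloor$ for \emph{every} $n<n^*$ (for small $n$ equality holds trivially); it suffices to exhibit one length just below $n^*$ where equality fails. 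Your double-counting idea is the right one --- it is precisely the paper's method in Section~II, where one counts pairs of nonzero entries columnwise to get $\sum_i\binom{N_i}{2}\leq\binom{|\C|}{2}$ together with $\sum_i N_i=|\C|w$ --- but you never carry out the arithmetic, and dismissing it as ``routine'' hides the only place where $w_1$ enters (via $M=\lfloor n/w_1\rfloor$) and where the integrality/convexity bookkeeping actually produces $w^2-w_1(w-1)$.

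The upper bound is the more serious problem. Your greedy picture (blocks of size $w_1$ plus slack, supports ``almost'' equal to their block, fresh coordinates preventing overlap) cannot work even in principle: a code meeting the Johnson bound has $M=\lfloor n/w_1\rfloor$ codewords each of weight $w>w_1$, so the total weight $Mw$ strictly exceeds $n$ and the supports \emph{must} overlap pervasively --- on average each coordinate is hit about $\lceil w/w_1\rceil$ times, with distinct symbols at each reuse. So the difficulty is not ``keeping enough fresh coordinates'' but engineering an exact packing structure in which every pair of supports meets in at most one point while coordinates are reused up to $q-1$ times; this is why the paper (and the cited source \cite{Cheeetal:2010a}, from which Proposition~\ref{bound} is quoted rather than reproved) builds such codes from combinatorial designs --- $\vw$-balanced packings, difference families and generalized difference packings --- rather than greedily. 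Your proposal neither supplies such a construction nor derives the specific threshold $4w_1(w-1)^2+1$; the ``$(w-1)^2$ blocked pairs with cushion $4w_1$'' heuristic is not backed by an argument that the process terminates with exactly $\lfloor n/w_1\rfloor$ codewords of the exact composition $\vw$. As it stands, the upper-bound half is a plan whose central mechanism is absent, and the lower-bound half, while pointed in the right direction, still needs the (C2) correction and the actual computation.
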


The upper and lower bounds on  $N_{ccc}(\vw)$ in Proposition~\ref{bound} differ approximately by a factor of $4w_1$. Our interest in this paper is in determining the exact values of $N_{ccc}(\vw)$. In fact, a stronger lower bound of $N_{ccc}(\vw)$ is established in Section~\ref{lbound},  and proved to be tight for ternary constant-composition codes. In Section~\ref{combcons}, we provide a general combinatorial construction for optimal linear size constant composition codes. Based on this construction, Sections~\ref{ngeql} and \ref{nleql} serve to prove that our new lower bound of  $N_{ccc}(\vw)$ is also tight for quaternary constant-composition codes. Finally, by refining and lengthening techniques, we determine the values of $N_{ccc}(\vw)$ for all $\vw$ provided that $3w_1\geq w$. Our main result is summarized as below.
\begin{mtheorem}\label{main} Given a composition $\vw$ with at least two components. Let $\lambda=\lceil\frac{w}{w_1}\rceil$ and $s=\lambda w_1-w$. Then \[N_{ccc}(\vw)\geq \lambda(\lambda-1)w_1^2-2(\lambda-1)sw_1+w_1-\lfloor\frac{2s}{\lambda}\rfloor.\]
In particular, equality holds for all $\vw$ provided that $3w_1\geq w$, and $\vw$ is not a refinement of any composition in $\{\llbracket 4,4,2 \rrbracket, \llbracket 4,3,3 \rrbracket, \llbracket 5,5,3 \rrbracket, \llbracket 5,4,4 \rrbracket\}$.
\end{mtheorem}
Previously, exact values of $N_{ccc}(\vw)$ were known only for binary codes or for compositions $\vw$ with total weight at most six.

\section{Lower bounds}\label{lbound}
In this section, we prove the lower bound of $N_{ccc}(\vw)$ in Main Theorem~\ref{main}.  Chee {\em et al.} \cite{Cheeetal:2010a} showed that the following two conditions are necessary and sufficient
for a $q$-ary code $\C$ of constant weight $w$ to have distance $2w-1$:
\begin{description}
\item[(C1)] for any distinct $\su,\sv\in\C$, $|{\rm supp}(\su) \cap {\rm supp}(\sv)|\leq 1$, and
\item[(C2)] for any distinct $\su,\sv\in\C$, if $x\in {\rm supp}(\su) \cap {\rm supp}(\sv)$, then $\su_x\neq \sv_x$.
\end{description}
The idea of deducing our lower bound  is based on the above two conditions, which have been used in \cite{Cheeetal:2010a} to obtain the lower bound in Proposition~\ref{bound}.

Let $\C=\{\su^{(1)},\ldots,\su^{(|\C|)}\}$ be an $(n, 2w-1, \vw)_q$-code. Then, $\C$  can be regarded as an $|\C|\times n$ matrix $\bC$, whose $j$th row is $\su^{(j)}$, $j\in [{|\C|}]$. Let $N_i$ be the number of nonzero entries in column $i$ of $\bC$, $i\in [n]$. Then,
\begin{align}\label{nonzeros}
\sum_{i\in [{n}]}N_i=|\C|w.
\end{align}

 In each column of $\bC$, we associate each pair of distinct nonzero entries with the pair of rows that contain these entries. There are $N_i \choose 2$ such pairs of nonzero entries in column $i$ of $\bC$. Therefore, there are $\sum_{i\in [{n}]}{N_i \choose 2}$ such pairs in all columns of $\bC$. Since there are no pairs of distinct codewords in $\C$ whose support intersect in two elements, the $\sum_{i\in [{n}]}{N_i \choose 2}$ pairs of rows associated with the $\sum_{i\in [{n}]}{N_i \choose 2}$ pairs of distinct nonzero entries are also all distinct. Hence,
\begin{align}\label{pairs}
\sum_{i\in [{n}]}{N_i \choose 2}\leq {|\C| \choose 2}.
\end{align}
We will use Eq.~(\ref{pairs}) to obtain our lower bound on $N_{ccc}(\vw)$. Given a composition $\vw$, let $\lambda:=\lceil\frac{w}{w_1}\rceil$ and $s:=\lambda w_1-w$. Since $q\geq 3$,  we have $\lambda\geq 2$ and $0\leq s< w_1$.

We first deal with the case when $w_1|n$. Let $n=Mw_1$ and $|\C|=M$.  It is easy to show that the left hand side of Eq.~(\ref{pairs}), $\sum_{i\in [{n}]}{N_i \choose 2}$ achieves the minimum value when all $N_i$ have almost the same values, that is, $N_i$ equals $\lambda$ or $\lambda-1$, $i\in [{n}]$ by Eq.~(\ref{nonzeros}). Assume that there are $x$ columns such that $N_i=\lambda-1$. Then by Eq.~(\ref{nonzeros}), \[Mw=\lambda (Mw_1-x)+(\lambda-1)x.\] Hence $x=\lambda Mw_1-Mw=Ms$. By Eq.~(\ref{pairs}), we have \[{\lambda \choose 2}(Mw_1-Ms)+{\lambda-1\choose 2}Ms\leq {M\choose 2},\] which yields that $M\geq \lambda(\lambda-1)w_1-2(\lambda-1)s+1$. Let $\mu:=\lambda(\lambda-1)w_1-2(\lambda-1)s$. Then \[n\geq (\mu+1) w_1,\] that is,  $(\mu+1) w_1$ is the smallest possible length $n$ which is a multiple of $w_1$ such that $A_q(n,2w-1,\vw)=
\left\lfloor\dfrac{n}{w_1}\right\rfloor$.

Next, we deal with length $n$ such that $\mu w_1<n<(\mu+1) w_1$. Suppose that $|\C|=\mu$ and $n=\mu w_1+r$, where $1\leq r < w_1$. We need to find the smallest integer $r$ such that Eqs.~(\ref{nonzeros}) and (\ref{pairs}) both hold.  By doing the same arguments as the case when $w_1|n$, we deduce that $\lambda(\lambda-1)r\geq \mu$, that is, \[r\geq \lceil \frac{\mu}{\lambda(\lambda-1)}\rceil= w_1-\left\lfloor\frac{2s}{\lambda}\right\rfloor.\]
Since $r<w_1$, we need $2s\geq \lambda$ in this case.

Now we have proved the following lower bound on $N_{ccc}(\vw)$.
\begin{proposition}
\label{boundccc}  Given a composition $\vw$ with at least two components. Let $\lambda=\lceil\frac{w}{w_1}\rceil$ and $s=\lambda w_1-w$. Then \[N_{ccc}(\vw)\geq \mu w_1+ \lceil \frac{\mu}{\lambda(\lambda-1)}\rceil= (\mu+1) w_1-\left\lfloor\frac{2s}{\lambda}\right\rfloor,\]
where $\mu=\lambda(\lambda-1)w_1-2(\lambda-1)s$.
\end{proposition}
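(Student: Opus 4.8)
The plan is to read off, from the structural conditions (C1)--(C2) characterizing distance-$(2w-1)$ constant-weight codes, two elementary facts about the code matrix $\bC$ of an $(n,2w-1,\vw)_q$-code $\C$: writing $N_i$ for the number of nonzero entries in column $i$, one has $\sum_{i\in[n]}N_i=|\C|w$ and $\sum_{i\in[n]}\binom{N_i}{2}\le\binom{|\C|}{2}$. The lower bound on $N_{ccc}(\vw)$ then reduces to exhibiting a single length $n^\star$ at which $A_q(n^\star,2w-1,\vw)<\lfloor n^\star/w_1\rfloor$: since the set of thresholds $n_0$ for which equality holds at all $n\ge n_0$ is upward-closed and nonempty (by Theorem~\ref{asy}), every such $n_0$ must exceed $n^\star$, whence $N_{ccc}(\vw)\ge n^\star+1$.

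First I would prove the convexity lemma used implicitly here: for a fixed value of $\sum_i N_i$ over a fixed number $n$ of columns, $\sum_i\binom{N_i}{2}$ is minimized at the most balanced allocation, in which each $N_i$ equals $\lambda$ or $\lambda-1$. This is the standard smoothing argument --- replacing a pair $(N_i,N_j)$ with $N_i\ge N_j+2$ by $(N_i-1,N_j+1)$ never increases a sum of a convex function --- and the side constraints $0\le N_i\le\min\{|\C|,q-1\}$ may be dropped since they only raise the true minimum, which is all we use.

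The argument then splits by the residue of $n$ modulo $w_1$. In the divisible regime $n=Mw_1$ with $|\C|=M=\lfloor n/w_1\rfloor$, the identity $\sum_iN_i=Mw$ forces, in the balanced configuration, exactly $Ms$ columns with $N_i=\lambda-1$ and $Mw_1-Ms$ columns with $N_i=\lambda$; substituting into $\sum_i\binom{N_i}{2}\le\binom{M}{2}$ and simplifying gives $M\ge\lambda(\lambda-1)w_1-2(\lambda-1)s+1=\mu+1$, so the Johnson bound fails at every positive multiple of $w_1$ not exceeding $\mu w_1$. In the remaining regime $\mu w_1<n<(\mu+1)w_1$, write $n=\mu w_1+r$ with $1\le r<w_1$ (so $\lfloor n/w_1\rfloor=\mu$), set $|\C|=\mu$, and repeat: the balanced configuration now has $y=\lambda r+\mu s$ columns at $\lambda-1$, and feeding this into $\sum_i\binom{N_i}{2}\le\binom{\mu}{2}$ yields, after substituting $\mu=(\lambda-1)(\lambda w_1-2s)$, the clean inequality $\lambda(\lambda-1)r\ge\mu$, i.e.\ $r\ge\lceil\mu/(\lambda(\lambda-1))\rceil=w_1-\lfloor 2s/\lambda\rfloor$. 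Hence the Johnson bound also fails for all $n=\mu w_1+r$ with $1\le r<w_1-\lfloor 2s/\lambda\rfloor$. Choosing $n^\star=(\mu+1)w_1-\lfloor 2s/\lambda\rfloor-1$ --- a multiple of $w_1$ when $\lfloor 2s/\lambda\rfloor=w_1-1$ and otherwise of residue in $[1,w_1-1]$, so covered by one of the two regimes --- gives $N_{ccc}(\vw)\ge(\mu+1)w_1-\lfloor 2s/\lambda\rfloor=\mu w_1+\lceil\mu/(\lambda(\lambda-1))\rceil$.

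The only laborious step is the algebraic collapse in the second regime: substituting $n=\mu w_1+r$, $y=\lambda r+\mu s$, and $\mu=(\lambda-1)(\lambda w_1-2s)$ into $(n-y)\binom{\lambda}{2}+y\binom{\lambda-1}{2}\le\binom{\mu}{2}$ and watching the many terms cancel down to $r\lambda\ge\lambda w_1-2s$. The one point needing care is the bookkeeping around the definition of $N_{ccc}(\vw)$ --- that a single failure at $n^\star$ is enough, and that $n^\star$ does land in one of the two treated regimes; no further case analysis is required, and the matching upper bound (equality) is a separate matter handled by the constructions in the later sections.
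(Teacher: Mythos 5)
Your proof is correct and follows essentially the same route as the paper: the same counting identities $\sum_i N_i=|\C|w$ and $\sum_i\binom{N_i}{2}\le\binom{|\C|}{2}$ derived from (C1)--(C2), the same convexity/balancing minimization, and the same two regimes $n=Mw_1$ and $n=\mu w_1+r$ yielding $M\ge\mu+1$ and $\lambda(\lambda-1)r\ge\mu$. The only difference is that you make explicit the smoothing lemma and the final bookkeeping (exhibiting a single failing length $n^\star$ and using upward-closedness of the threshold set), steps the paper leaves implicit.
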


As mentioned above, $(\mu+1) w_1$ is the smallest possible integer $n$ which is a multiple of $w_1$ such that  $A_q(n,2w-1,\vw)=\lfloor\frac{n}{w_1}\rfloor$. Further,
\begin{align*}&(\mu+1) w_1-\left\lfloor\frac{2s}{\lambda}\right\rfloor= w^2-w_1(w-1)+(w_1s-s^2-\lfloor\frac{2s}{\lambda}\rfloor).
\end{align*}
Since $w_1s-s^2-\lfloor\frac{2s}{\lambda}\rfloor\geq 0$ when $\lambda\geq 2$, the lower bound in Proposition~\ref{boundccc} is stronger than that in Proposition~\ref{bound}.

Observe that in Proposition~\ref{boundccc}, the lower bound only depends on the total weight $w$ and the biggest component $w_1$. By Lemma~\ref{refine}, it is easy to prove the following fact.
\begin{lemma}\label{refine1} Suppose that $\vw$ is a refinement of $\vv$ such that $w_1=v_1$. If $N_{ccc}(\vv)$ achieves the lower bound in Proposition~\ref{boundccc},  so does $N_{ccc}(\vw)$.
\end{lemma}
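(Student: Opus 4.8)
\textbf{Proof plan for Lemma~\ref{refine1}.}

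The plan is to combine Lemma~\ref{refine} with the observation, already made in the excerpt, that the lower bound in Proposition~\ref{boundccc} depends only on the total weight $w$ and the largest component $w_1$. First I would note that if $\vw$ is a refinement of $\vv$, then the two compositions have the same total weight: since the sets $S_1,\ldots,S_{q'}$ partition $[q]$ and $\sum_{i\in S_j}w_i=v_j$, summing over $j$ gives $\sum_{i\in[q]}w_i=\sum_{j\in[q']}v_j$, i.e. $w=v$ (using the notation $w=\sum_i w_i$ and $v=\sum_j v_j$). Together with the hypothesis $w_1=v_1$, this means the quantities $\lambda=\lceil w/w_1\rceil=\lceil v/v_1\rceil$, $s=\lambda w_1-w=\lambda v_1-v$, and hence $\mu$ coincide for $\vw$ and $\vv$. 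Consequently the right-hand side of the bound in Proposition~\ref{boundccc}, namely $(\mu+1)w_1-\lfloor 2s/\lambda\rfloor$, is literally the same integer for $\vw$ as for $\vv$; call it $n_0$.

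Next I would use Lemma~\ref{refine}: for every $n$, $A_q(n,2w-1,\vw)\ge A_q(n,2v-1,\vv)$, and since $w=v$ the distance parameter is the same, so $A_q(n,2w-1,\vw)\ge A_q(n,2w-1,\vv)$. If $N_{ccc}(\vv)$ achieves the lower bound, then $N_{ccc}(\vv)=n_0$, so for all $n\ge n_0$ we have $A_q(n,2w-1,\vv)=\lfloor n/v_1\rfloor=\lfloor n/w_1\rfloor$. Combining the two facts, for all $n\ge n_0$,
\[
\left\lfloor\frac{n}{w_1}\right\rfloor\ \ge\ A_q(n,2w-1,\vw)\ \ge\ A_q(n,2w-1,\vv)\ =\ \left\lfloor\frac{n}{w_1}\right\rfloor,
\]
where the first inequality is the Johnson bound (Proposition~\ref{johnsonbound}, iterated, as noted in the excerpt). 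Hence $A_q(n,2w-1,\vw)=\lfloor n/w_1\rfloor$ for all $n\ge n_0$, so $N_{ccc}(\vw)\le n_0$. On the other hand Proposition~\ref{boundccc} gives $N_{ccc}(\vw)\ge n_0$, so $N_{ccc}(\vw)=n_0$, i.e. $N_{ccc}(\vw)$ also achieves the lower bound.

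There is essentially no obstacle here; the only point requiring a moment's care is the bookkeeping that a refinement preserves the total weight and therefore preserves $\lambda$, $s$, and $\mu$ when additionally $w_1=v_1$ — this is what makes the lower bound of Proposition~\ref{boundccc} numerically identical for the two compositions. Everything else is a direct squeeze between the Johnson upper bound and the refinement inequality of Lemma~\ref{refine}. One should also recall that $\vw$ having more components than $\vv$ means $q$ may need to be taken large enough to house $\vw$; since $A_q$ is non-decreasing in $q$ (padding the alphabet), this causes no difficulty, and for cleanliness one can simply take $q$ equal to one plus the number of components of $\vw$ throughout.
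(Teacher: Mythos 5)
Your proof is correct and follows exactly the route the paper intends: the paper observes that the bound in Proposition~\ref{boundccc} depends only on $w$ and $w_1$ (both preserved here, since refinement preserves total weight and $w_1=v_1$ by hypothesis), and then invokes Lemma~\ref{refine} together with the Johnson upper bound to squeeze $A_q(n,2w-1,\vw)=\lfloor n/w_1\rfloor$ for all $n\ge n_0$. Your write-up just fills in the bookkeeping the paper leaves as ``easy,'' so there is nothing to change.
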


Now we show that for ternary constant-composition codes, the lower bound in Proposition~\ref{boundccc} is always achievable.

\begin{proposition}
\label{nccc3}For all $w_1\geq w_2\geq 1$, we have  $N_{ccc}(\llbracket w_1, w_2\rrbracket)= 2w_1w_2+w_2$.
\end{proposition}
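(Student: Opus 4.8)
First I would verify the lower bound as a special case of Proposition~\ref{boundccc}. For $\vw = \llbracket w_1, w_2 \rrbracket$ we have $w = w_1 + w_2$, and since $w_1 \geq w_2 \geq 1$ we get $w_1 < w \leq 2w_1$, so $\lambda = \lceil w/w_1 \rceil = 2$ and $s = 2w_1 - w = w_1 - w_2$. Plugging into $\mu = \lambda(\lambda-1)w_1 - 2(\lambda-1)s = 2w_1 - 2(w_1 - w_2) = 2w_2$, Proposition~\ref{boundccc} gives $N_{ccc}(\llbracket w_1, w_2 \rrbracket) \geq (\mu+1)w_1 - \lfloor 2s/\lambda \rfloor = (2w_2+1)w_1 - (w_1 - w_2) = 2w_1 w_2 + w_2$. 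So the entire content of the proposition is the matching upper bound: for every $n \geq 2w_1 w_2 + w_2$ one must construct an $(n, 2w-1, \llbracket w_1, w_2 \rrbracket)_3$-code of size $\lfloor n/w_1 \rfloor$.

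The plan is to build such optimal codes directly, using the equivalent conditions (C1)–(C2): a ternary constant-weight-$w$ code has distance $2w-1$ iff any two codewords share at most one support coordinate, and at such a shared coordinate their entries differ. Since $q = 3$, each codeword is specified by a weight-$w$ support together with a partition of that support into a size-$w_1$ block (the $1$'s) and a size-$w_2$ block (the $2$'s). I would organize the construction around the threshold value $n_0 = 2w_1 w_2 + w_2 = w_2(2w_1 + 1)$, at which the target size is $\lfloor n_0/w_1 \rfloor = 2w_2$ (using $w_1 > w_2 \geq 1$, so $n_0/w_1 = 2w_2 + w_2/w_1 < 2w_2 + 1$). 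The natural approach is a two-step argument: (i) exhibit a "base" optimal code for a convenient family of lengths — e.g. lengths that are multiples of $w_1$, or the single critical length $n_0$ — typically by a resolvable-design or difference-family style construction where the $2w_2$ codewords are arranged so that the $w_1$-blocks form a near-resolvable structure and the $w_2$-blocks likewise, with cross-intersections controlled; and (ii) a lengthening lemma showing that from an optimal $(n, 2w-1, \vw)_q$-code one can always produce an optimal $(n+1, 2w-1, \vw)_q$-code, so that optimality at length $n_0$ propagates to all $n \geq n_0$. For step (ii) the increment $\lfloor (n+1)/w_1 \rfloor - \lfloor n/w_1 \rfloor$ is $0$ except when $w_1 \mid (n+1)$, so most lengthening steps simply add an all-zero column, and only the occasional step requires genuinely adding a new codeword on the newly available coordinates together with a few recycled old coordinates — a local surgery that (C1)–(C2) make manageable.

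For the base construction I would try to write the $2w_2$ codewords explicitly over a coordinate set of size $n_0$. One clean way: take coordinates indexed by $\bz_{2w_2} \times [w_1] \cup (\text{a small tail})$ or, more in the spirit of the paper's later sections, index coordinates so that codeword $j$ (for $j \in [2w_2]$) has its $1$-block on a translate $B + j$ of a fixed $w_1$-subset $B$ of a cyclic group and its $2$-block chosen to avoid creating double intersections. The counting already done in Section~\ref{lbound} is tight, meaning the base code must be "perfectly packed" — every pair of codewords meets in exactly one coordinate and the column multiplicities $N_i$ are all $\lambda = 2$ or $\lambda - 1 = 1$ — so this is essentially forcing the existence of a specific combinatorial design, and constructing it (and, for small $w_2$, checking a handful of exceptional cases by hand) is where the real work lies.

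The main obstacle I expect is the base construction at the exact threshold length $n_0$: the lower-bound computation is tight only when the codeword-pair intersections and column weights are maximally balanced, which is a rigid combinatorial requirement, and producing an explicit optimal code of size exactly $2w_2$ on exactly $w_2(2w_1+1)$ coordinates for all $w_1 > w_2 \geq 1$ (rather than just asymptotically, as in Theorem~\ref{asy}) will need a genuinely uniform construction, plus possibly ad hoc treatment of the smallest parameters. By contrast, once the base code exists, the lengthening step should be routine given how mild the target-size increments are.
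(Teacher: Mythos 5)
Your lower-bound computation is correct and matches the paper: specializing Proposition~\ref{boundccc} with $\lambda=2$, $s=w_1-w_2$, $\mu=2w_2$ gives $N_{ccc}(\llbracket w_1,w_2\rrbracket)\geq 2w_1w_2+w_2$. But your upper bound is only a plan, and the plan has a genuine gap where all the content lies. You never produce the ``base'' optimal code: you say the tight counting forces a perfectly balanced design and that ``constructing it \ldots is where the real work lies,'' which is precisely the step left unproved, and it must be done not just at the single threshold length but (in your scheme) re-established each time the target size $\lfloor n/w_1\rfloor$ jumps. Your claim that the jump steps are ``a local surgery that (C1)--(C2) make manageable'' is not routine: since $q=3$, any column already containing both nonzero symbols cannot appear in the support of a new codeword at all (it would agree with one of the two existing entries, violating (C2)), so the added codeword must reuse $w-1$ columns each containing at most one nonzero entry, no two of them lying in the support of the same old codeword, with forced symbol choices respecting the composition $\llbracket w_1,w_2\rrbracket$ --- and at the tight lengths the columns are maximally packed, so such columns are scarce and their availability depends on the global structure of the code. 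Nothing in the proposal establishes this. (There is also a small slip: for $w_1=w_2$ one has $s=0$, the threshold length $w_2(2w_1+1)$ is a multiple of $w_1$, and the target size there is $2w_2+1$, not $2w_2$; your parenthetical assumes $w_1>w_2$.)

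The paper avoids all construction for the ternary case: since $A_3(n,2w-1,\vw)$ was completely determined by Svanstr\"{o}m, \"{O}sterg{\aa}rd and Bogdanova, namely $A_3(n,2w-1,\vw)=\max\{M: n\geq M(w_1+\max\{w_2-\frac{M-1}{2},0\})\}$, the upper-bound half reduces to verifying that $F(n)=n-M(w_1+\max\{w_2-\frac{M-1}{2},0\})\geq 0$ for $M=\lfloor n/w_1\rfloor$ and all $n\geq 2w_1w_2+w_2$, which is a two-line induction on $n$. If you want a self-contained constructive route, you would essentially have to redo the work of that reference (or adapt the difference-family machinery the paper deploys in Sections~\ref{ngeql} and \ref{nleql} for the quaternary case); as written, your argument proves only the lower bound.
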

\begin{proof}Let $\vw=\llbracket w_1, w_2\rrbracket$. Then $\lambda=2$ and $s=w_1-w_2$. By Proposition~\ref{boundccc},  $N_{ccc}(\vw)\geq 2w_1w_2+w_2$. By \cite{Svanstrometal:2002},
\begin{align*}
A_3(n,2w-1,\vw)=\max\{M:n\geq M(w_1+\max\{w_2-\frac{M-1}{2},0\})\}.
\end{align*}
So we only need to check that for all $n\geq 2w_1w_2+w_2$, $A_3(n,2w-1,\vw)=\lfloor\dfrac{n}{w_1}\rfloor$. Let \[F(n):=n-M(w_1+\max\{w_2-\frac{M-1}{2},0\}),\] where $M=\lfloor\dfrac{n}{w_1}\rfloor$ is a function of $n$. Since $A_3(n,2w-1,\vw)\leq \lfloor\dfrac{n}{w_1}\rfloor$ for all $\vw$, it suffices to check that $F(n)\geq 0$ for all $n\geq 2w_1w_2+w_2$. We prove it by induction on $n$. It is easy to show that $F(2w_1w_2+w_2)=0$. Suppose that $F(n)\geq 0$ for some $n\geq 2w_1w_2+w_2$, we want to show that $F(n+1)\geq 0$ too. Let $n=Mw_1+r$, where $0\leq r <w_1$ and $M\geq 2w_2$. If $r<w_1-1$, then $n+1=Mw_1+r+1$. Hence $F(n+1)=F(n)+1\geq 1$. If $r=w_1-1$, then $n+1=(M+1)w_1$. Hence $F(n+1)= 0$. This completes the proof.
\end{proof}

\section{A Combinatorial Construction}\label{combcons}

In this section, we provide a general combinatorial construction for optimal $(n, 2w-1, \vw)_q$-codes of size $\lfloor\dfrac{n}{w_1}\rfloor$, when the length $n\geq (\mu+1) w_1-\left\lfloor\frac{2s}{\lambda}\right\rfloor$, where $\mu=\lambda(\lambda-1)w_1-2(\lambda-1)s$, $\lambda=\lceil\frac{w}{w_1}\rceil$ and $s=\lambda w_1-w$. From now on, we assume that $n\geq (\mu+1) w_1-\left\lfloor\frac{2s}{\lambda}\right\rfloor$. By Lemma~\ref{refine1}, we can also assume that the composition $\vw$  is not a refinement of any $\vv$ such that $w_1=v_1$.

 Note that $A_q(n,2w-1,\vw)\leq M$ for all length $n\in [Mw_1, Mw_1+w_1-1]$. If $2s<\lambda$, we only need to construct optimal codes for length $n$ which is a multiple of $w_1$, that is $n\in \{Mw_1: M\geq \mu+1\}$. If $2s\geq \lambda$, we also need to construct optimal codes for length $n= (\mu+1) w_1-\left\lfloor\frac{2s}{\lambda}\right\rfloor=\mu w_1+\lceil \frac{\mu}{\lambda(\lambda-1)}\rceil$. In this case, $\mu w_1<n<(\mu+1) w_1$, and the optimal codes have size upper bounded by $\mu$. For other length $n$, apply the lengthening method (adding zeros in the end of codewords) used in \cite{Cheeetal:2010a}. For convenience, let
 \[S(\vw)=\{(M,Mw_1): M\geq \mu+1\}\cup \{(\mu, \mu w_1+\lceil \frac{\mu}{\lambda(\lambda-1)}\rceil): \text{ if } 2s\geq \lambda\},\] which  is the collection of pairs $(M,n)$ that we need to construct an $(n, 2w-1, \vw)_q$-code of size $M$.

 Before giving our construction,  we introduce necessary terminology in combinatorial design theory.
A {\em set system} is a pair $(X,\B)$ such that $X$ is a finite set
of {\em points} and $\B$ is a set of subsets of $X$, called {\em
blocks}. The {\em order} of the set system is $|X|$, the number of
points. For a set of  nonnegative integers $K$, a set system $(X,\B)$
is said to be $K$-{\em uniform} if $|B|\in K$ for all $B\in\B$.

 A $(v,K)$-{\em packing} is a $K$-uniform set system
$(X,\B)$ of order $v$, such that each pair of $X$ occurs in at most
one block in $\B$. The {\em packing number} $D(v,K)$ is the maximum
number of blocks in any $(v,K)$-packing. A $(v,K)$-packing
$(X,\B)$ is said to be {\em optimal} if $|\B|=D(v,K)$. If $K=\{k\}$, then we write $k$ instead of $\{k\}$ for short. The values
of $D(v,k)$ have been determined for all $v$ when $k\in
\{3,4\}$ \cite{SWY:2007}. In particular, we have
\begin{equation}\label{eq1} D(v,3)=\begin{dcases}
\left\lfloor\frac{v}{3}\left\lfloor\frac{v-1}{2}\right\rfloor\right\rfloor-1, &\text{if $v\equiv 5 \pmod 6$;} \\
\left\lfloor\frac{v}{3}\left\lfloor\frac{v-1}{2}\right\rfloor\right\rfloor, &\text{otherwise. }
\end{dcases}\end{equation}
In fact, when $v\equiv 1,3 \pmod 6$, an optimal $(v,3)$-packing is also called {\em a Steiner triple system of order $v$}, denoted by STS$(v)$. In this case,
 each  pair of points occurs exactly once.

Suppose that $\C$ is an $(n,2w-1,\vw)_q$-code of size $M$, where $(M,n)\in S(\vw)$.
 As in Section~\ref{lbound},   $\C$  can be regarded as an $M\times n$ matrix $\bC$, whose  rows are codewords of $\C$.
For each column $c\in [{n}]$, we assume that $N_c=\lambda$ or $\lambda-1$, although it is not necessarily the case. Further, each entry from $[{q-1}]$  occurs at most once  in each column. Let the rows of $C$ be indexed by $\bz_{M}$.
Then we can define a $(q-1)$-tuple $A_c=(a_1,a_2,\ldots, a_{q-1})\in (\bz_{M}\cup\{*\})^{q-1}$ for each column $c$, with $a_i$ being the index of the row containing symbol $i$ in column $c$, $i\in [{q-1}]$. If some symbol $i$ does not occur in column $c$, then let $a_i=*$.  Let $\A=\{A_c:c\in [{n}]\}$, then
$\A$ satisfies the following properties.
\begin{description}
\item[(T1)] For each $c$, all the elements in $A_c$ excluding $*$  are  distinct. Let $B_c$ be the set containing all elements in $A_c$ excluding $*$ and $\B=\{B_c:c\in [{n}]\}$. Then
 $(\bz_{M},\B)$ is an $(M,\{\lambda,\lambda-1\})$-packing of size $n$.
\item[(T2)] For each position $i\in [{q-1}]$ (referring positions in $A_c$),  each element of $\bz_{M}$  occurs in  position $i$ exactly $w_i$  times in $\A$.
\end{description}

  \begin{example}\label{code2packing}Let $\vw=\llbracket 2,2,1\rrbracket$. Then $\lambda=3$ and $s=1$. The following  is the matrix form of an optimal $(18,9,\vw)_4$-code of size $9$ from \cite{Cheeetal:2010a}. Let the rows of $C$ be indexed with elements in $\bz_9$. Then the corresponding triples of $\A$ are listed below each column of $C$.

\begin{equation*}
C=\left[ \begin{array}{@{}*{18}{c}@{}}
1&0&0&1&2&0&0&0&0&2&0&3&0&0&0&0&0&0\\
0&0& 1&0&0&1&2&0&0&0&0&2&0&3&0&0&0&0\\
0&0&0&0& 1&0&0&1&2&0&0&0&0&2&0&3&0&0\\
0&0&0&0&0&0& 1&0&0&1&2&0&0&0&0&2&0&3\\
0&3&0&0&0&0&0&0& 1&0&0&1&2&0&0&0&0&2\\
0&2&0&3&0&0&0&0&0&0& 1&0&0&1&2&0&0&0\\
0&0&0&2&0&3&0&0&0&0&0&0& 1&0&0&1&2&0\\
2&0&0&0&0&2&0&3&0&0&0&0&0&0& 1&0&0&1\\
0&1&2&0&0&0&0&2&0&3&0&0&0&0&0&0& 1&0\\
\end{array} \right]
\end{equation*}

\begin{equation*}
\A:~ \begin{array}{@{}*{18}{c}@{}}
\downarrow&\downarrow&\downarrow&\downarrow&\downarrow&\downarrow& \downarrow&\downarrow&\downarrow&\downarrow&\downarrow&\downarrow&\downarrow&\downarrow&\downarrow&\downarrow&\downarrow&\downarrow\\
0&8&1&0&2&1&3&2&4&3&5&4&6&5&7&6&8&7\\
7&5&8&6&0&7&1&8&2&0&3&1&4&2&5&3&6&4\\
*&4&*&5&*&6&*&7&*&8&*&0&*&1&*&2&*&3\\
\end{array}
\end{equation*}
It is easy to check that $\A$ satisfies the properties (T1) and (T2).
\end{example}

The converse is true. Given a pair $(\bz_M,\A)$, where $\A\subset (\bz_{M}\cup\{*\})^{q-1}$. If  $\A$ satisfies (T1) and (T2) for a composition $\vw$, then
we can construct an $M\times |\A|$ matrix $\bC$ in a natural way, where the rows of $\bC$ form
  an $(|\A|,2w-1,\vw)_q$-code of size $M$. In fact, (T1) guarantees the code has minimum distance $2w-1$, while (T2)  guarantees each codeword is of constant composition $\vw$. Such a pair $(\bz_M,\A)$ is called a {\em $\vw$-balanced $(M,q-1)$-packing}.

  \begin{proposition}\label{pac2code}
  If there exists a $\vw$-balanced $(M,q-1)$-packing of size $n$, then there exists an $(n,2w-1,\vw)_q$-code of size $M$.
  \end{proposition}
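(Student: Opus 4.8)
\textbf{Proof proposal for Proposition~\ref{pac2code}.}

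The plan is to make the correspondence described informally in the paragraph preceding the proposition completely explicit, and then verify conditions (C1) and (C2), which were quoted earlier as being necessary and sufficient for a constant-weight code to have distance $2w-1$, together with the constant-composition property. First I would fix a $\vw$-balanced $(M,q-1)$-packing $(\bz_M,\A)$ of size $n$, index the elements of $\A$ by $[n]$, and build the $M\times n$ matrix $\bC$ column by column: for column $c$, with $A_c=(a_1,\ldots,a_{q-1})$, set the $(a_i,c)$-entry equal to $i$ whenever $a_i\neq *$, and set every other entry of column $c$ to $0$. By (T1) the nonzero entries in column $c$ sit in distinct rows, so this is well defined. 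Let $\su^{(1)},\ldots,\su^{(M)}$ be the rows of $\bC$; these are the candidate codewords, which lie in $\bz_q^{[n]}$.

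Next I would check the composition property. Fix a row index $j\in\bz_M$ and a symbol $i\in[q-1]$. The number of columns $c$ with $\su^{(j)}_c=i$ is exactly the number of $A_c$ whose $i$th coordinate equals $j$, which is $w_i$ by (T2). Hence every row has composition $\vw$, so in particular every row has Hamming weight $w=\sum_{i=1}^{q-1}w_i$, and $\bC$ has constant composition $\vw$. (Here I should note in passing that (T2) forces $nw_i$ to be the total count of symbol $i$, and summing over $i$ recovers $n(w_1+\cdots+w_{q-1})$ nonzero entries spread over $n$ columns with $\lambda-1$ or $\lambda$ nonzeros each, consistent with the packing having the stated block sizes — but this bookkeeping is not needed for the proof and I would only mention it if it streamlines the exposition.)

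Then I would verify (C1) and (C2) for two distinct rows $\su^{(j)},\su^{(j')}$. A column $c$ lies in $\mathrm{supp}(\su^{(j)})\cap\mathrm{supp}(\su^{(j')})$ precisely when both $j$ and $j'$ appear among the non-$*$ entries of $A_c$, i.e.\ when $\{j,j'\}\subseteq B_c$. Since $(\bz_M,\B)$ is a packing, the pair $\{j,j'\}$ is contained in at most one block $B_c$, so $|\mathrm{supp}(\su^{(j)})\cap\mathrm{supp}(\su^{(j')})|\le 1$; this is (C1). For (C2), suppose $c$ is such a common support coordinate. Because all non-$*$ entries of $A_c$ are distinct by (T1), $j$ and $j'$ occupy different positions of $A_c$, say positions $i$ and $i'$ with $i\neq i'$; then $\su^{(j)}_c=i\neq i'=\su^{(j')}_c$, giving (C2). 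By the cited characterization from \cite{Cheeetal:2010a}, a constant-weight-$w$ code satisfying (C1) and (C2) has distance $2w-1$, so the rows of $\bC$ form an $(n,2w-1,\vw)_q$-code of size $M$, completing the proof.

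I do not anticipate a genuine obstacle here: the statement is essentially a restatement of the already-established dictionary between codes and balanced packings, and the only care needed is to be precise that (T1) supplies both the ``at most one common column'' half (via the packing condition on $\B$) and the ``different symbols in a shared column'' half (via distinctness of entries within a single $A_c$), while (T2) supplies the composition. If anything is mildly delicate, it is simply making sure the zero-filling of the remaining entries in each column is stated cleanly so that the weight count comes out to exactly $w$ and no spurious nonzeros creep in.
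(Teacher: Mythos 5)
Your proof is correct and follows the same route as the paper, which establishes this proposition via the informal discussion preceding it: build the matrix $\bC$ from $\A$ in the natural way, with (T1) yielding distance $2w-1$ through conditions (C1) and (C2), and (T2) yielding constant composition $\vw$. You have simply written out explicitly the verification the paper leaves to the reader.
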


We aim to construct  optimal $(n,2w-1,\vw)_q$-codes of size $M$ by establishing the existence of  $\vw$-balanced $(M,q-1)$-packings of size $n$ for $(M,n)\in S(\vw)$. By the similar arguments as in Section~\ref{lbound}, we can compute the numbers of blocks of sizes $\lambda$ and $\lambda-1$ in the $(M,\{\lambda,\lambda-1\})$-packing defined in  (T1). The details of these numbers are listed in the following table.
\begin{table}[h!]
\begin{center}  \caption{Distribution of block sizes $\lambda$ and $\lambda-1$}
\begin{tabular}{ |c| c |c| c|}
  \hline
  &$n$&\# blocks of size $\lambda$&\# blocks of size $\lambda-1$\\ \hline
  $M\geq \mu+1$&$Mw_1$&$Mw_1-Ms$&$Ms$\\ \hline
  $M=\mu$  &$M w_1+\lceil \frac{M}{\lambda(\lambda-1)}\rceil$ &$M w_1+(1-\lambda)\lceil \frac{M}{\lambda(\lambda-1)}\rceil-Ms$&$\lambda\lceil \frac{M}{\lambda(\lambda-1)}\rceil+Ms$\\ \hline
\end{tabular}

\label{table1}
\end{center}
\end{table}

The next two sections will study linear size quaternary constant-composition codes. Given a composition $\vw=\llbracket w_1, w_2,w_3\rrbracket$, if $w_1\geq w_2+w_3$, then $\vw$ is a refinement of $\llbracket w_1, w_2+w_3\rrbracket$. By Lemma~\ref{refine1} and Proposition~\ref{nccc3}, the value of $N_{ccc}(\vw)$ can be determined for this case. Hence  we assume that $w_1< w_2+w_3$, that is, $\mu=6w_1-4s$, $\lambda=3$ and $s=2w_1-w_2-w_3$ in the remaining of this paper. Distribution of different block sizes in this case is listed below.

\begin{table}[h!]
\begin{center}  \caption{Distribution of block sizes $3$ and $2$}
\begin{tabular}{ |c| c |c| c|}
  \hline
  &$n$&\# blocks of size $3$&\# blocks of size $2$\\ \hline
  $M\geq \mu+1$&$Mw_1$&$M(w_1-s)$&$Ms$\\ \hline
  $M=\mu$  &$M w_1+\lceil \frac{M}{6}\rceil$ &$M w_1-2\lceil \frac{M}{6}\rceil-Ms$&$3\lceil \frac{M}{6}\rceil+Ms$\\ \hline
\end{tabular}

\label{table2}
\end{center}
\end{table}

\section{Constructions for $n\geq (\mu+1)w_1$}\label{ngeql}

In this section, we show that $A_4(n,2w-1,\vw)=
\lfloor\dfrac{n}{w_1}\rfloor$ for all $n\geq (\mu+1)w_1=6w_1^2-4sw_1+w_1$ based on the existence of difference families.

\subsection{Difference Families}
Let $B=\{b_1,\ldots,b_k\}$ be a $k$-subset of $\bz_n$. The {\em list of differences from $B$} is the multiset $\Delta B=\langle b_i-b_j: i,j \in [k], i\neq j\rangle$.  A collection $\{B_1 , \ldots, B_t\}$ of $k$-subsets of  $\bz_n$ forms an
$(n, k; t)$ {\em difference packing}, or $t$-DP$(n,k)$, if every nonzero element of $\bz_n$
occurs at most once  in $\Delta B_1 \cup\cdots \cup \Delta B_t$. The sets $B_i$ are {\em base blocks}.  If every nonzero element of $\bz_n$
occurs exactly once in $\Delta B_1 \cup\cdots \cup \Delta B_t$, it is  known as an $(n, k)$ {\em difference family}, or DF$(n,k)$ \cite{abel2007difference}. The parameter $t$ is omitted since it could be computed from $n$ and $k$, that is, $t=\frac{n-1}{k(k-1)}$. Since $t$ must be an integer, for a DF$(n,k)$ exists,  we must have $n\equiv 1 \pmod{k(k-1)}$.

The sizes of base blocks are same in a difference packing. It is natural to  generalize difference packings to a collection of subsets with the same property but with {\em varying} block sizes. If $t=e_1+\ldots+e_s$, and if there are $e_i$ base blocks of size $k_i$, then the  generalized difference packing is of {\em block type} $k_1^{e_1}\cdots k_s^{e_s}$, and denoted by GDP$(n,k_1^{e_1}\cdots k_s^{e_s})$. Without loss of generality, we assume that $k_1\geq\cdots\geq k_s\geq 2$.
%
%
%
%
%
%

Given a triple $A=(a_1,a_2,a_3)\in (\bz_M\cup\{*\})^3$, define \[{\rm Orb}_{\bz_M}A=\{(a_1+i,a_2+i,a_3+i): i\in \bz_M\},\] where $*+i=*$ for any $i\in \bz_M$.

\begin{proposition}
\label{diffpack}  Suppose that there exists a  GDP$(M,3^{e_1}2^{e_2})$. Let $w_1=e_1+e_2$, $w_2$ and $w_3$ be  integers such that $w_1\geq w_2\geq w_3$ and $w_2+w_3=2e_1+e_2$. Then there exists a $\vw$-balanced $(M,3)$-packing of size $n$, where $n=w_1M$ and $\vw=\llbracket w_1, w_2,w_3 \rrbracket$.
\end{proposition}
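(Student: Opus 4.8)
The plan is to obtain the required $\vw$-balanced $(M,3)$-packing by \emph{developing} the base blocks of the given GDP cyclically over $\bz_M$, after choosing where to insert the symbol $*$ in each block of size $2$, and then verifying the axioms (T1) and (T2) directly from the difference-packing property.

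First I would fix notation: let $B_1,\dots,B_{e_1}$ be the size-$3$ base blocks and $B_{e_1+1},\dots,B_{e_1+e_2}$ the size-$2$ base blocks of the GDP$(M,3^{e_1}2^{e_2})$, and order each size-$3$ block arbitrarily so that it becomes an element of $\bz_M^3$. For the size-$2$ blocks, never place $*$ in the first coordinate; instead let exactly $z_2:=w_1-w_2$ of them carry $*$ in the second coordinate and the remaining $z_3:=w_1-w_3$ carry $*$ in the third coordinate. This allocation is legitimate because $z_2,z_3\ge 0$, $z_2+z_3=2w_1-(w_2+w_3)=2(e_1+e_2)-(2e_1+e_2)=e_2$, and $z_2,z_3\le e_2$; the last inequality holds since $w_1\ge w_2\ge w_3$ together with $w_2+w_3=2e_1+e_2$ forces $w_2\ge w_3\ge e_1$. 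Denote by $A_i\in(\bz_M\cup\{*\})^3$ the triple built from $B_i$, and set $\A:=\bigcup_{i=1}^{e_1+e_2}{\rm Orb}_{\bz_M}A_i$.

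Then I would check the following. (i) \emph{Size and distinctness.} Since the GDP condition forbids a base block from repeating an entry in its own difference list, no orbit is short, so each ${\rm Orb}_{\bz_M}A_i$ has exactly $M$ triples; and since it also forbids two base blocks from sharing a signed difference, triples (indeed even supports) from different base blocks never coincide. Hence $|\A|=(e_1+e_2)M=w_1M=n$. (ii) \emph{Condition (T1).} The supports of the triples of $\A$ are precisely the $\bz_M$-translates of the $B_i$, giving $e_1M$ blocks of size $3$ and $e_2M$ of size $2$, which matches Table~\ref{table2} once one writes $s=2w_1-w_2-w_3$ and $e_1=w_1-s$; and a pair of $\bz_M$ at difference $d$ appears in a block of ${\rm Orb}_{\bz_M}A_i$ only if $d\in\Delta B_i$, so the difference-packing property forces each pair into at most one block, i.e. $(\bz_M,\B)$ is an $(M,\{3,2\})$-packing of size $n$. (iii) \emph{Condition (T2).} Developing a single triple $A_i$, its $j$-th coordinate takes each value of $\bz_M$ exactly once when that coordinate is not $*$ and contributes nothing when it is; hence a fixed element of $\bz_M$ occurs in coordinate $j$ of $\A$ exactly as often as there are $A_i$ with non-$*$ $j$-th coordinate, namely $e_1+e_2=w_1$ when $j=1$, $e_1+(e_2-z_2)=w_2$ when $j=2$, and $e_1+(e_2-z_3)=w_3$ when $j=3$. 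Items (i)--(iii) say that $(\bz_M,\A)$ is a $\vw$-balanced $(M,3)$-packing of size $w_1M$, which is exactly the claim (and, via Proposition~\ref{pac2code}, it yields the corresponding $(w_1M,2w-1,\vw)_4$-code).

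The step I expect to need the most care is (i): ruling out short orbits and coincidences between orbits of distinct base blocks, and confirming that the prescribed numbers $z_2,z_3$ of starred coordinates are simultaneously nonnegative, sum to $e_2$, and make the coordinate multiplicities in (T2) equal $(w_1,w_2,w_3)$. Everything else is a direct translation of the notion of a \emph{difference packing} into the two conditions (T1) and (T2).
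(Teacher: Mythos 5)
Your proposal is correct and follows essentially the same route as the paper: develop each base block of the GDP$(M,3^{e_1}2^{e_2})$ over $\bz_M$, placing $*$ in the second coordinate of $w_1-w_2$ of the size-two blocks and in the third coordinate of $w_1-w_3$ of them (exactly the paper's split into $\B_2$ and $\B_3$, since $w_3-e_1=w_1-w_2$ and $w_2-e_1=w_1-w_3$), and then read off (T1) from the difference-packing property and (T2) from the fact that each developed coordinate hits every element of $\bz_M$ once. Your extra checks on orbit lengths, non-coinciding supports, and the feasibility of the star allocation are sound and simply make explicit what the paper leaves implicit.
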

\begin{proof} Given a GDP$(M,3^{e_1}2^{e_2})$, partition the set $\B$ of base blocks into three parts $\B_1$, $\B_2$ and $\B_3$, where $\B_1$ consists of all $e_1$ blocks of size three,  $\B_2$ contains $w_2-e_1$ blocks of size two, and  $\B_3$ contains the remaining $w_3-e_1$ blocks of size two. For any $B=\{a,b,c\}\in \B_1$, define $A_{B}=(a,b,c)$; for any $B=\{a,b\}\in \B_2$, define $A_{B}=(a,b,*)$ and; for any $B=\{a,c\}\in \B_3$, define $A_{B}=(a,*,c)$. Let $\A=\cup_{B\in \B} {\rm Orb}_{\bz_M}A_B$, then $(\bz_M,\A)$ is   a $\vw$-balanced $(M,3)$-packing of size $n$, where $n=w_1M$ and $\vw=\llbracket w_1, w_2,w_3 \rrbracket$.
\end{proof}

In a DF$(n,k)$,  the $t$ blocks $B_i = \{b_{i,1} , \ldots , b_{i,k}\}$, $i\in [t]$, form a {\em perfect} $(n, k)$
difference family over $\bz_n$ if the $tk(k - 1)/2$ differences $b_{i,h}-b_{ i,g}$  ($i \in [t]$,
$1 \leq  g < h \leq k$) cover the set $\{1, 2, \ldots, (n- 1)/2\}$. If instead, they cover the set
$\{1, 2, \ldots, (n-3)/2\} \cup \{(n + 1)/2\}$, then the difference family is {\em quasi-perfect}. We denote them by PDF$(n,k)$ and quasi-PDF$(n,k)$ respectively. The existences of PDF$(n,k)$s and quasi-PDF$(n,k)$s are known when $k=3$.

\begin{theorem}\label{pdf}\cite{abel2007difference}
A PDF$(n,3)$
exists when $n \equiv 1 \text{ or } 7\pmod {24}$, and a quasi-PDF$(n,3)$
exists when $n \equiv 13 \text{ or } 19\pmod {24}$.
\end{theorem}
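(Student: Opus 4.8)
\noindent\emph{Proof plan.}
The plan is to translate the existence of a PDF$(n,3)$ or quasi-PDF$(n,3)$ into a purely additive partition problem and then solve that problem with Skolem-type sequences. Write $n=6t+1$, so that an $(n,3)$ difference family has exactly $t=(n-1)/6$ base blocks and $(n-1)/2=3t$. For a base block $B_i=\{b_{i,1},b_{i,2},b_{i,3}\}$, translated so that $b_{i,1}<b_{i,2}<b_{i,3}$, the three differences $b_{i,h}-b_{i,g}$ with $g<h$ are $x_i:=b_{i,2}-b_{i,1}$, $y_i:=b_{i,3}-b_{i,2}$, and $x_i+y_i$; so $B_i$ is, up to translation, determined by the pair $(x_i,y_i)$. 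Hence $\{B_1,\dots,B_t\}$ is a PDF$(n,3)$ exactly when the $t$ triples $\{x_i,y_i,x_i+y_i\}$ partition $\{1,2,\dots,3t\}$, and a quasi-PDF$(n,3)$ exactly when they partition $\{1,2,\dots,3t-1\}\cup\{3t+1\}$. In both cases $x_i+y_i<n$, so the remaining differences are the residues $n-x_i,n-y_i,n-(x_i+y_i)$, which automatically fill in the complementary half of $\bz_n\setminus\{0\}$; thus the object obtained is genuinely a difference family.

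Next I would record the necessary congruences. Summing $\{1,\dots,3t\}$ and using that in each triple the largest element is the sum of the other two gives $2\sum_i(x_i+y_i)=\binom{3t+1}{2}$, which forces $4\mid 3t(3t+1)$, hence $t\equiv 0,1\pmod 4$, i.e. $n\equiv 1,7\pmod{24}$; the analogous count for the set $\{1,\dots,3t-1\}\cup\{3t+1\}$ forces $t\equiv 2,3\pmod 4$, i.e. $n\equiv 13,19\pmod{24}$. These are exactly the two ranges in the statement, so it remains only to exhibit the partitions.

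For the constructions I would invoke Skolem sequences. A Skolem sequence of order $t$ is a partition of $\{1,\dots,2t\}$ into pairs $\{a_k,a_k+k\}$, $k=1,\dots,t$, and such a sequence exists precisely when $t\equiv 0,1\pmod 4$. Given one, form the $t$ triples $\{\,k,\ a_k+t,\ a_k+k+t\,\}$; each is additive since $k+(a_k+t)=(a_k+k)+t$, the first coordinates realise $\{1,\dots,t\}$, and the other two coordinates realise $\{a_k+t,(a_k+k)+t\}=\{t+1,\dots,3t\}$, so the triples partition $\{1,\dots,3t\}$ and yield a PDF$(n,3)$. Likewise, a hooked Skolem sequence of order $t$ is a partition of $\{1,\dots,2t-1\}\cup\{2t+1\}$ into pairs $\{a_k,a_k+k\}$, and exists precisely when $t\equiv 2,3\pmod 4$; the same triple construction then produces a partition of $\{1,\dots,t\}\cup\bigl(\{t+1,\dots,3t-1\}\cup\{3t+1\}\bigr)=\{1,\dots,3t-1\}\cup\{3t+1\}$, yielding a quasi-PDF$(n,3)$. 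Reading off the blocks $\{0,x_i,x_i+y_i\}\subseteq\bz_n$ from the resulting triples then completes the argument.

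The one genuinely nontrivial ingredient is the existence of Skolem and hooked Skolem sequences in every admissible order (due to Skolem and to O'Keefe respectively); in a self-contained account one would either cite this or reconstruct it from starter constructions together with a handful of small base cases. Everything else — the differences-to-blocks dictionary, the modulo-$4$ counting, and the shift-by-$t$ bookkeeping — is routine, though I would check the small orders $t\in\{1,2,3\}$ directly, since Skolem-type constructions occasionally need separate treatment there.
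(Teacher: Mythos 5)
Your proposal is correct. Note, however, that the paper does not prove this statement at all: Theorem~\ref{pdf} is quoted from the Abel--Buratti chapter on difference families \cite{abel2007difference}, so there is no in-paper argument to compare against. What you have written is essentially the classical proof that underlies the cited result: the dictionary between a block $\{0,x,x+y\}$ and the additive triple $\{x,y,x+y\}$, the observation that a PDF$(6t+1,3)$ (resp.\ quasi-PDF) is equivalent to partitioning $\{1,\dots,3t\}$ (resp.\ $\{1,\dots,3t-1\}\cup\{3t+1\}$) into such triples, and the shift-by-$t$ construction $\{k,\,a_k+t,\,a_k+k+t\}$ from a Skolem (resp.\ hooked Skolem) sequence of order $t$. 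Your bookkeeping is sound, including the point that in the quasi-perfect case the difference $3t+1=(n+1)/2$ and its negative $3t$ together with $\pm\{1,\dots,3t-1\}$ still exhaust $\bz_n\setminus\{0\}$, and your parity count correctly recovers the congruence conditions $n\equiv 1,7\pmod{24}$ and $n\equiv 13,19\pmod{24}$ (necessity is not claimed in the theorem, but it does no harm). The one genuinely nontrivial ingredient you outsource --- the existence of Skolem sequences for $t\equiv 0,1\pmod 4$ and hooked Skolem sequences for $t\equiv 2,3\pmod 4$ (Skolem, O'Keefe) --- must indeed be cited or reproved for a self-contained account; since the paper itself outsources the entire theorem to \cite{abel2007difference}, this level of reliance is entirely in keeping with the source, and your separate check of the small orders $t\in\{1,2,3\}$ is a sensible precaution rather than a gap.
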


\begin{corollary}\label{gpdf}
Let $e_1, e_2\geq 0$ be two integers. Then a GDP$(M,3^{e_1}2^{e_2})$ exists for all  $M\geq 6e_1+2e_2+1$ except when  $e_1 \equiv 2\text{ or } 3\pmod {4}$ and  $(M,e_2)=(6e_1+2,0)$.
\end{corollary}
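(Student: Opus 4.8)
The goal is to establish Corollary~\ref{gpdf}: for integers $e_1,e_2\geq 0$, a GDP$(M,3^{e_1}2^{e_2})$ exists for all $M\geq 6e_1+2e_2+1$ except when $e_1\equiv 2,3\pmod 4$ and $(M,e_2)=(6e_1+2,0)$.

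\begin{proof}[Proof proposal]
The plan is to reduce the existence of GDP$(M,3^{e_1}2^{e_2})$ to the known existence results for ordinary $3$-difference families and their (quasi-)perfect variants, handling the parity of $M$ and the residue of $M$ modulo small numbers separately. First I would recall the basic counting constraint: a GDP$(M,3^{e_1}2^{e_2})$ uses $e_1$ base blocks of size $3$ and $e_2$ of size $2$, contributing $6e_1+2e_2$ differences, so we need $6e_1+2e_2\le M-1$, i.e.\ $M\ge 6e_1+2e_2+1$; this matches the stated range. The idea is to \emph{start from a ``full'' difference family and throw blocks away}: if $M\equiv 1\pmod 6$ or $M\equiv 3\pmod 6$, there is a DF$(M,3)$ (equivalently an STS$(M)$ on $\bz_M$, by Theorem~\ref{pdf} when $M\equiv 1,7\pmod{24}$ and by classical Skolem/Bose-type constructions in general), whose $t=\frac{M-1}{6}$ base blocks cover every nonzero element of $\bz_M$ exactly once. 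Deleting base blocks from such a DF preserves the difference-packing property, so if we can obtain $e_1$ triples and then also produce $e_2$ pairs from the ``leftover'' differences, we are done. Concretely, once $6e_1+2e_2\le M-1$, keep $e_1$ of the triples and use the remaining triples (or unused elements of $\bz_M$) to build $e_2$ pairs with distinct differences; a PDF$(M,3)$ or quasi-PDF$(M,3)$ is exactly what lets one control which residues $1,\dots,(M-1)/2$ are hit by the triples, so that building $e_2$ additional short blocks of size $2$ realizing fresh half-differences becomes a transparent greedy argument.

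The main structural step is the reduction modulo the parity of $M$. When $M$ is odd, differences come in pairs $\{d,-d\}$, and a block of size $2$ "uses up" one such pair; when $M$ is even there is the special self-paired difference $M/2$. I would split into the cases $M$ odd and $M$ even, and within each, treat $M$ small (close to $6e_1+2e_2+1$) by an explicit or near-explicit construction and $M$ large by the deletion-from-DF argument above. For $M$ odd: if $M\equiv 1,3\pmod 6$, use a DF$(M,3)$ or PDF/quasi-PDF$(M,3)$ to extract $e_1$ triples whose differences avoid $e_2$ chosen values $d_1,\dots,d_{e_2}\le (M-1)/2$, and then append the pairs $\{0,d_1\},\dots,\{0,d_{e_2}\}$ after translating so supports/differences do not collide — this is where perfectness of the difference family does the real work, since it guarantees the triple-differences are exactly a controllable set of half-differences. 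If $M\equiv 5\pmod 6$ (still odd), one triple's worth of differences cannot be completed, but $\lfloor\frac{M-1}{6}\rfloor$ triples can, and the residual "defect" of one or two unused differences is absorbed into the size-$2$ blocks; I would invoke $D(M,3)$ from Eq.~\eqref{eq1} to see how many triples are available and check $6e_1\le$ (that many)$\cdot 6$ leaves room for the $e_2$ pairs. For $M$ even, write $M=2m$; build the triples inside a difference family on $\bz_M$ of the appropriate type and use the special difference $M/2$ to seed one of the pairs when it helps; the only place this can fail is the extreme case where $M=6e_1+2$ and $e_2=0$, because then we are asking for a \emph{perfect} partition of all $6e_1$ nonzero differences of $\bz_{6e_1+2}$ into $e_1$ triples — equivalently a DF$(6e_1+2,3)$, which requires $6e_1+2\equiv 1\pmod 6$, impossible, or more precisely requires a specific divisibility that fails exactly when $e_1\equiv 2,3\pmod 4$. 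That is precisely the listed exception.

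I expect the genuine obstacle to be the boundary case $M=6e_1+2e_2+1$ together with small $M=6e_1+2$, $e_2=0$, i.e.\ pinning down that the only obstruction is the stated congruence on $e_1$. Everything for $M$ comfortably above $6e_1+2e_2+1$ should follow mechanically by deleting blocks from a large difference family (such a DF$(M,3)$ exists for all $M\equiv 1,3\pmod 6$, and for other $M$ one uses an optimal $(M,3)$-packing from Eq.~\eqref{eq1}) and appending $e_2$ size-$2$ blocks on fresh differences, after checking that the number of ``free'' differences is at least $2e_2$ — a simple inequality given $M\ge 6e_1+2e_2+1$. The delicate part is the tight regime: when $e_2=0$ and $M=6e_1+2$, a GDP is literally a DF$(6e_1+2,3)$, which exists iff $6e_1+2\equiv 1\pmod{6}$ is replaced by the correct necessary-and-sufficient condition for $3$-DFs over $\bz_M$ with $M\equiv 2\pmod 6$ — and the known theory (again Theorem~\ref{pdf} and its companions for $M\equiv 13,19\pmod{24}$ versus $M\equiv 7\pmod{24}$) shows this holds precisely when $e_1\equiv 0,1\pmod 4$ and fails when $e_1\equiv 2,3\pmod 4$. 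So I would (i) dispose of $M=6e_1+2$, $e_2=0$ by directly quoting the classification of DF$(M,3)$, (ii) handle $M=6e_1+2e_2+1$ (the other tight value) by an ad hoc construction splicing a near-perfect triple system with $e_2$ explicit pairs, and (iii) fill in all larger $M$ by the deletion-plus-append argument, with a short case check on $M\bmod 6$ to decide whether to start from a DF, an STS, or a maximal $(M,3)$-packing. The write-up will mostly be bookkeeping once these three regimes are isolated.
\end{proof}
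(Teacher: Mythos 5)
There is a genuine gap in your plan, and it sits exactly where the corollary's content lies. Your main engine is ``work inside $\bz_M$: take a DF$(M,3)$ (or, for other residues of $M$, an optimal $(M,3)$-packing from Eq.~(\ref{eq1})), delete blocks, and append $e_2$ pairs on fresh differences.'' Two problems. First, a cyclic DF$(M,3)$ over $\bz_M$ only exists for $M\equiv 1\pmod 6$, so this covers one residue class of $M$; for all other lengths you fall back on Eq.~(\ref{eq1}), but that equation counts \emph{set-system} packings, which carry no $\bz_M$-invariance and no base blocks, so they cannot be turned into a difference packing at all. Establishing that $\bz_M$ admits enough base blocks of size $3$ with pairwise distinct differences for \emph{every} $M$ is essentially the statement you are trying to prove, so the ``mechanical'' part of your argument is circular or missing for $M\not\equiv 1\pmod 6$. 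Second, the tight value $M=6e_1+2e_2+1$ (and the near-tight even values) is exactly where deletion-from-a-large-family gives you no slack; you defer it to an unspecified ``ad hoc splicing,'' which is the whole difficulty. A smaller inaccuracy: a GDP$(6e_1+2,3^{e_1})$ is not ``literally a DF$(6e_1+2,3)$''; it is a difference packing missing exactly one nonzero difference, which by parity must be $M/2$, so the relevant obstruction is a Skolem-type parity condition, not the divisibility condition $n\equiv 1\pmod 6$ for DFs (though the corollary makes no existence claim in that exceptional case, so this only muddies your discussion).

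The paper avoids all case analysis on $M$ by a different and much cleaner mechanism that your proposal never uses: apply Theorem~\ref{pdf} over the \emph{small} group $\bz_m$ with $m=6e_1+1$, not over $\bz_M$. A PDF$(m,3)$ (for $e_1\equiv 0,1\pmod 4$) has its $3e_1$ differences equal to exactly $\{1,\ldots,3e_1\}$, and a quasi-PDF (for $e_1\equiv 2,3\pmod 4$) to $\{1,\ldots,3e_1-1\}\cup\{3e_1+1\}$; because every difference is at most $3e_1+1<M/2$ (except precisely when $M=6e_1+2$ in the quasi case), the \emph{same} $e_1$ base blocks form a difference packing in $\bz_M$ for every $M\ge m$. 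One then appends the pairs $P_i=\{0,\tfrac{m-1}{2}+i\}$ (with the tweak $P_1=\{0,\tfrac{m-1}{2}\}$ in the quasi-perfect case to pick up the skipped difference $3e_1$), and a one-line check shows all differences remain distinct as soon as $M\ge m+2e_2=6e_1+2e_2+1$. This handles all $M$ uniformly, makes the tight case automatic, and isolates the single exception $(M,e_2)=(6e_1+2,0)$ as the collision $3e_1+1\equiv M/2$. If you want to salvage your write-up, replace the ``delete from a structure on $\bz_M$'' step by this embed-small-differences-into-$\bz_M$ step; as it stands, your argument does not go through outside $M\equiv 1\pmod 6$ and does not treat the boundary lengths.
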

\begin{proof}For each  $e_1 \equiv 0\text{ or } 1\pmod {4}$, let $m=6e_1+1$. By Theorem~\ref{pdf}, there exists a PDF$(m,3)$ over $\bz_m$.
Let $\B$ be the collection of all $e_1$ base blocks.
Given any $e_2\geq 0$,  let $P_i=\{0, \frac{m-1}{2}+i\}$, $i\in [{e_2}]$. Then $\B\cup \{P_i: i\in [{e_2}]\}$ is a GDP$(M,3^{e_1}2^{e_2})$
 for all $M\geq m+2e_2$.

 For each  $e_1 \equiv 2\text{ or } 3\pmod {4}$, let $m=6e_1+1$. By Theorem~\ref{pdf}, there exists a quasi-PDF$(m,3)$ over $\bz_m$, which is also
   a GDP$(M,3^{e_1})$ for all $M\geq m$ except when $M= m+1$.
Let $\B$ be the collection of all $e_1$ base blocks.
Given any $e_2\geq 1$,  let $P_1=\{0, \frac{m-1}{2}\}$ and $P_i=\{0, \frac{m-1}{2}+i\}$ for all $i\in [{2,e_2}]$. Then $\B\cup \{P_i: i\in [{e_2}]\}$ is a GDP$(M,3^{e_1}2^{e_2})$
 for all $M\geq m+2e_2$.
\end{proof}
By the relations among all parameters in Proposition~\ref{diffpack}, it is easy to show that $6e_1+2e_2+1=6w_1-4s+1=\mu+1$. Combining Corollary~\ref{gpdf}, Propositions~\ref{pac2code} and~\ref{diffpack}, it is immediate that the following result holds.
\begin{proposition}\label{gpdf2}
Let $\vw=\llbracket w_1, w_2,w_3 \rrbracket$.  Then $A_4(n,2w-1,\vw)=\lfloor\dfrac{n}{w_1}\rfloor$  for all $n= Mw_1$, where $M\geq \mu+1$, except when $w_1=w_2=w_3\equiv 2\text{ or } 3\pmod {4}$ and $n=6w_1^2+2w_1$.
\end{proposition}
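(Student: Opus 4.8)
The plan is to combine the three ingredients already assembled in this section: Corollary~\ref{gpdf} (existence of GDP$(M,3^{e_1}2^{e_2})$), Proposition~\ref{diffpack} (a GDP yields a $\vw$-balanced $(M,3)$-packing of size $w_1M$), and Proposition~\ref{pac2code} (such a packing yields an $(n,2w-1,\vw)_4$-code of size $M$). First I would translate the parameters. Given $\vw=\llbracket w_1,w_2,w_3\rrbracket$ with $w_1<w_2+w_3$, set $e_1=w_2+w_3-w_1$ and $e_2=w_1-e_1=2w_1-w_2-w_3=s$; then $e_1+e_2=w_1$ and $2e_1+e_2=w_2+w_3$, so Proposition~\ref{diffpack} applies with these $e_1,e_2$. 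A short computation gives $6e_1+2e_2+1=6(w_2+w_3-w_1)+2(2w_1-w_2-w_3)+1=4(w_2+w_3)-2w_1+1$; and since $\mu=6w_1-4s=6w_1-4(2w_1-w_2-w_3)=4(w_2+w_3)-2w_1$, we get $6e_1+2e_2+1=\mu+1$, exactly as the paragraph before the statement already notes. One must also check $e_1\geq 0$, i.e. $w_2+w_3\geq w_1$, which is our standing assumption, and $e_2=s\geq 0$, which holds since $\lambda=3$ forces $0\leq s<w_1$.

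Next I would feed this into Corollary~\ref{gpdf}: a GDP$(M,3^{e_1}2^{e_2})$ exists for all $M\geq 6e_1+2e_2+1=\mu+1$, with the sole exception $e_1\equiv 2$ or $3\pmod 4$ together with $(M,e_2)=(6e_1+2,0)$. Chaining Propositions~\ref{diffpack} and~\ref{pac2code} then produces an $(n,2w-1,\vw)_4$-code of size $M$ with $n=w_1M$ for every such $M$, and since $A_4(n,2w-1,\vw)\leq\lfloor n/w_1\rfloor=M$ always holds by Proposition~\ref{johnsonbound} (with the final reduction step), this code is optimal and $A_4(n,2w-1,\vw)=\lfloor n/w_1\rfloor$. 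So the conclusion follows for all $n=Mw_1$, $M\geq\mu+1$, outside the exceptional case.

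It remains to describe the exceptional case in the language of $\vw$ and $n$. The condition $e_2=0$ means $s=0$, i.e. $w_2+w_3=2w_1$; combined with $w_1\geq w_2\geq w_3$ this forces $w_1=w_2=w_3$. Then $e_1=w_2+w_3-w_1=w_1$, and $M=6e_1+2=6w_1+2$, so $n=w_1M=6w_1^2+2w_1$; and $e_1\equiv 2$ or $3\pmod 4$ becomes $w_1\equiv 2$ or $3\pmod 4$. This matches the exception stated in the proposition verbatim. I do not expect any genuine obstacle here: the proposition is essentially a repackaging of Corollary~\ref{gpdf} through the two bridging propositions, and the only care needed is the bookkeeping that $6e_1+2e_2+1=\mu+1$ and that the degenerate exceptional parameters correspond precisely to the equilateral composition $w_1=w_2=w_3$ with $w_1\equiv 2,3\pmod 4$ at length $6w_1^2+2w_1$. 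If anything is delicate, it is merely confirming that Proposition~\ref{diffpack}'s hypotheses ($w_1\geq w_2\geq w_3$, $w_2+w_3=2e_1+e_2$) are met and that no off-by-one creeps into the identification $n=w_1M$.
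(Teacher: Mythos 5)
Your proposal is correct and follows essentially the same route as the paper: the paper proves Proposition~\ref{gpdf2} precisely by noting $6e_1+2e_2+1=6w_1-4s+1=\mu+1$ and combining Corollary~\ref{gpdf} with Propositions~\ref{diffpack} and~\ref{pac2code}, together with the Johnson upper bound. Your parameter bookkeeping ($e_1=w_2+w_3-w_1$, $e_2=s$) and the identification of the exceptional case with $w_1=w_2=w_3\equiv 2,3\pmod 4$ at $n=6w_1^2+2w_1$ match the paper's argument exactly.
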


\subsection{Exceptions in Proposition~\ref{gpdf2}}\label{subexcept}
Now we settle the exceptional cases in Proposition~\ref{gpdf2}. That is, we need to prove that \[A_4(6w_1^2+2w_1,2w-1,\llbracket w_1, w_1,w_1 \rrbracket)=6w_1+2\] for all $w_1\equiv 2\text{ or } 3\pmod {4}$.

By  Proposition~\ref{pac2code}, we need to construct a $\vw$-balanced $(6w_1+2,3)$-packing of size $6w_1^2+2w_1$ for all $w_1\equiv 2\text{ or } 3\pmod {4}$, where $\vw=\llbracket w_1, w_1,w_1 \rrbracket$. Actually, they exist for all positive integers $w_1$. Before stating our general construction, we give a small example first. Note that in this case, $s=0$, so there are only blocks of size three in the $(6w_1+2,\{3,2\})$-packing by Table~\ref{table2}, which is further optimal by the packing number in Eq.~(\ref{eq1}).

  \begin{example}\label{ext1t2}Let $w_1=2$ and $G=\bz_5\oplus \bz_3$. Write $x_y$ for the pair $(x,y)\in \bz_5\oplus \bz_3$. Let $B_0=(0_0,0_1,0_2)$, $B_1=(0_0,2_0,1_1)$ and $B_2=(0_0,4_0,2_1)$.  Let $\B'=\cup_{i\in [{0,2}]}${\rm Orb}$_G B_i$. If we consider all triples in $\B'$ as unordered $3$-subsets, then $\B'$ is the block set of an STS$(15)$ over $G$ due to Skolem \cite{Skolem:1927}.  Let $\B=\B'\setminus\{B\in \B': 4_2\in B\}$ and $X=G\setminus\{4_2\}$, then $(X,\B)$ is an optimal $(14,3)$-packing of size $28$ if again consider triples as unordered sets. We show the reordering procedures in Table~\ref{abn00}.

\begin{table}[h!] 
\center\caption{Reordering Procedures in Example~\ref{ext1t2}}\label{abn00}
\begin{tabular}{c|c|c|c}
\hline
 \multicolumn{2}{c|}{{\rm Orb}$_G B_0$}&  \multicolumn{2}{c}{reordering} \\
\hline
 \multicolumn{2}{c|}{($0_0, 0_1,0_2$)}&  \multicolumn{2}{c}{$\rightarrow(0_2, 0_1,0_0$)} \\
  \multicolumn{2}{c|}{($1_0, 1_1,1_2$)}&  \multicolumn{2}{c}{$\rightarrow(1_2, 1_1,1_0$)} \\
   \multicolumn{2}{c|}{($2_0, 2_1,2_2$)}&  \multicolumn{2}{c}{$\rightarrow(2_1,2_2, 2_0 $)} \\
    \multicolumn{2}{c|}{($3_0, 3_1,3_2$)}&  \multicolumn{2}{c}{$\rightarrow( 3_1,3_2, 3_0$)} \\
     \multicolumn{2}{c|}{\st{($4_0, 4_1,4_2$)}}&  \multicolumn{2}{c}{} \\
\hline
 {\rm Orb}$_G B_1$& reordering & {\rm Orb}$_G B_2$& reordering\\
\hline
($0_0,2_0,1_1$)&&($0_0,4_0,2_1$)&\\
($1_0,3_0,2_1$)&&($1_0,0_0,3_1$)&\\
($2_0,4_0,3_1$)&&($2_0,1_0,4_1$)&\\
($3_0,0_0,4_1$)&&($3_0,2_0,0_1$)&\\
($4_0,1_0,0_1$)&&($4_0,3_0,1_1$)&\\

($0_1,2_1,1_2$)&&($0_1,4_1,2_2$)&\\
($1_1,3_1,2_2$)&&($1_1,0_1,3_2$)&\\
($2_1,4_1,3_2$)&&\st{($2_1,1_1,4_2$)}&\\
\st{($3_1,0_1,4_2$)}&&($3_1,2_1,0_2$)&\\
($4_1,1_1,0_2$)&&($4_1,3_1,1_2$)&\\

($0_2,2_2,1_0$)&&\st{($0_2,4_2,2_0$)}&\\
($1_2,3_2,2_0$)&$\rightarrow(3_2,1_2,2_0$)&($1_2,0_2,3_0$)&\\
\st{($2_2,4_2,3_0$)}&&($2_2,1_2,4_0$)&\\
($3_2,0_2,4_0$)&&($3_2,2_2,0_0$)&$\rightarrow(2_2,3_2,0_0$)\\
\st{($4_2,1_2,0_0$)}&&\st{($4_2,3_2,1_0$)}&\\
\hline
\end{tabular}
\end{table}
 First look at the set {\rm Orb}$_G B_1\cup${\rm Orb}$_G B_2$, in which each element from $G$ occurs twice in each position. After deleting triples containing the element $4_2$,  elements $2_1,3_1,0_2,2_2$ occurs only once in the first position, elements $0_1,1_1,1_2,3_2$ occurs only once in the second position, and elements $0_0,1_0,2_0,3_0$ occurs only once in the third position. It is natural to think of reordering triples from {\rm Orb}$_G B_0$ to increase the occurrences of these elements. After reordering the remaining triples of {\rm Orb}$_G B_0$ as in Table~\ref{abn00},  element $1_2$ occurs three times in the first position but only once in the second position, while element $2_2$ occurs only once in the first position but three times in the second position. Finally, we exchange the first two elements in the triple ($1_2,3_2,2_0$) from {\rm Orb}$_G B_1$ and ($3_2,2_2,0_0$) from {\rm Orb}$_G B_2$ to balance the occurrences.

\end{example}
\begin{proposition}\label{packing}
For all positive integers $w_1$, there exists a $\vw$-balanced $(6w_1+2,3)$-packing of size $6w_1^2+2w_1$, where  $\vw=\llbracket w_1, w_1,w_1 \rrbracket$.
\end{proposition}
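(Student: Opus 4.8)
The plan is to generalize the construction illustrated in Example~\ref{ext1t2} from $w_1=2$ to arbitrary $w_1$. We want a $\vw$-balanced $(6w_1+2,3)$-packing of size $6w_1^2+2w_1$ with $\vw=\llbracket w_1,w_1,w_1\rrbracket$; equivalently, a set $\A$ of $6w_1^2+2w_1$ triples over $\bz_{6w_1+2}$ satisfying (T1) and (T2), where (T1) asks that the underlying $3$-subsets form an optimal $(6w_1+2,3)$-packing (so each pair appears at most once, and the count $6w_1^2+2w_1$ matches $D(6w_1+2,3)$ by Eq.~(\ref{eq1}) since $6w_1+2\not\equiv 5\pmod 6$), and (T2) asks that each element of $\bz_{6w_1+2}$ appears exactly $w_1$ times in each of the three coordinate positions.

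First I would produce the unordered packing. Take $G=\bz_{2w_1+1}\oplus\bz_3$ and use a Skolem-type construction: there is a known cyclic-over-$\bz_3$ Steiner triple system STS$(6w_1+3)$ on $G$ given by base blocks $B_0=(0_0,0_1,0_2)$ and $B_i=(0_0,(\pm 2i)_0,i_1)$-type short-orbit and full-orbit blocks developed through $G$ (the Skolem / Bose-type families). Deleting the point $(2w_1)_2$ together with the $2w_1+1$ blocks through it leaves an optimal $(6w_1+2,3)$-packing on $X=G\setminus\{(2w_1)_2\}$ of size exactly $(6w_1^2+5w_1+1)-(2w_1+1)=6w_1^2+3w_1$\,---\,wait, that count must be reconciled with $6w_1^2+2w_1$; in fact STS$(6w_1+3)$ has $\frac{(6w_1+3)(6w_1+2)}{6}=6w_1^2+5w_1+1$ blocks, and removing a point kills $\frac{6w_1+2}{2}=3w_1+1$ blocks, leaving $6w_1^2+2w_1$, as desired. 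So the unordered structure is automatic from a Skolem STS; the real content is the \emph{ordering}.

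The heart of the argument is to choose, for each remaining triple, an ordering of its three entries so that (T2) holds. I would track the coordinate-position deficiencies: after deleting the block orbit through $(2w_1)_2$, in the orbits coming from the "off-diagonal" base blocks each position is still balanced except for a controlled handful of elements (those sharing a short difference with $(2w_1)_2$), exactly as in Example~\ref{ext1t2}; one then reorders the triples of the "diagonal" orbit $\mathrm{Orb}_G B_0=\{(x_0,x_1,x_2)\}$ to repair most of the imbalance, and finishes with a bounded number of transpositions of pairs of entries between two off-diagonal triples to zero out the residual discrepancy. The cleanest way to organize this is to set up, for each position $p\in\{1,2,3\}$, the signed surplus function $\delta_p:\bz_{6w_1+2}\to\bz$ (number of appearances in position $p$ minus $w_1$), observe $\sum_p\delta_p(x)=0$ for every $x$ (each point lies in exactly $3w_1$ triples) and $\sum_x\delta_p(x)=0$ for every $p$, and show the initial ordering can be chosen so that the support of each $\delta_p$ is small and the total $\ell_1$-imbalance is $O(w_1)$; then a greedy matching/transposition argument drives all $\delta_p$ to $0$ while never creating a repeated pair (transpositions within a triple don't change the underlying set, so (T1) is preserved).

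The main obstacle I anticipate is making the bookkeeping uniform in $w_1$: the Skolem construction splits into residue classes of $w_1$ (the structure of the short orbits and of the "hook" differences differs according to $w_1\bmod 2$ or $\bmod 3$ or $\bmod 4$), and one must check in each class that (i) the deleted point really can be chosen so that the leftover position-imbalance is localized, and (ii) enough disjoint triples are available to absorb the residual imbalance by pairwise transpositions without clobbering already-balanced elements or repeating a pair. I would handle this by giving one explicit family of base blocks together with an explicit reordering rule and an explicit final list of transpositions, parametrized by $w_1$, and then verifying (T1) and (T2) by a direct difference/count computation in each residue class\,---\,tedious but routine once the pattern from the $w_1=2$ case is set up correctly, and the paper presumably carries this out over the next few pages.
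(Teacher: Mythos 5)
Your setup coincides with the paper's own: both start from the Skolem STS$(6w_1+3)$ on $G=\bz_{2w_1+1}\oplus\bz_3$ with base blocks $(0_0,0_1,0_2)$ and $(0_0,(2x)_0,x_1)$, delete the point $(2w_1)_2$ (and the $3w_1+1$ blocks through it) to obtain an optimal $(6w_1+2,3)$-packing of size $6w_1^2+2w_1$, and then reorder the surviving triples so that (T2) holds; you even guess correctly that the repair consists of reordering the short ``diagonal'' orbit ${\rm Orb}_G A_0$ together with a few transpositions inside triples of two of the long orbits, which is exactly what the paper does in its steps (S1)--(S4).

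The gap is that the explicit reordering and its verification are the entire content of the proposition, and at that point your argument stops being a proof. You reduce everything to the claim that, once the surplus functions $\delta_p$ have small support and total imbalance $O(w_1)$, ``a greedy matching/transposition argument drives all $\delta_p$ to $0$'' without creating repeated pairs. The (T1)-preservation part is fine, since swapping entries within a triple does not change its underlying $3$-set; what is not justified is termination at zero imbalance: each such swap changes the positional counts of two specific elements simultaneously, so one needs, for the surplus/deficit pattern that actually arises after the deletion, triples whose entries realize compatible swap pairs (or chains of swaps), and nothing in your outline guarantees that such triples are available among the surviving blocks. The paper does not invoke any such general principle; it writes down a concrete parametrized reordering --- (S1)--(S3) acting on ${\rm Orb}_G A_0$, then (S4) exchanging two entries in selected triples of ${\rm Orb}_G A_{\kappa}$ and ${\rm Orb}_G A_{w_1}$ with $\kappa=\lceil w_1/2\rceil$ --- and checks balance by tracking, after each step, exactly which elements occur $w_1-1$ or $w_1+1$ times in each position (its Table~\ref{abn0}). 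You explicitly defer this construction as ``tedious but routine'' and to be carried out case-by-case in residue classes, so as written your proposal is a correct programme with the right skeleton, but the decisive step is asserted rather than proved.
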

\begin{proof}
Let $u=2w_1+1$. We start from an STS$(3u)$ which is due to Skolem \cite{Skolem:1927}.
Let $G=\bz_u\oplus \bz_3$.
Choose base blocks $A_0=(0_0,0_1,0_2)$ and $A_x=(0_0,(2x)_0,x_1)$, $x\in [{w_1}]$. It is easy to see that $|${\rm Orb}$_G A_0|=u$ and $|${\rm Orb}$_G A_x|=3u$, $x\in [{w_1}]$. Let  $\A'=\cup_{x\in [{0,w_1}]} ${\rm Orb}$_G A_x$, $\A=\A'\setminus\{A\in \A': (2w_1)_2\in A\}$ and $X=G\setminus \{(2w_1)_2\}$. If we consider triples as unordered sets, 
  then $(G,\A')$ is an STS$(3u)$, and
 $(X,\A)$ is an optimal $(6w_1+2,3)$-packing of size $6w_1^2+2w_1$.

 Note that $\A$ doesn't satisfy (T2) at this
moment.  The first three rows of Table~\ref{abn0} point out the sets of elements occurring $w_1-1$ or $w_1+1$ in each position, all others occur in the corresponding positions exactly $w_1$ times in $\A$.
The  $w_1-1$ occurrences happen when deleting triples from {\rm Orb}$_G A_x$, $x\in [{w_1}]$, while  $w_1+1$ happens because of triples from {\rm Orb}$_G A_0$.

For convenience, denote $\iota=\lfloor\frac{w_1}{2}\rfloor$ and $\kappa=\lceil\frac{w_1}{2}\rceil$. We  follow the steps below.

\begin{description}
\item[(S1)] In {\rm Orb}$_G A_0$, change $((2i+1)_0,(2i+1)_1,(2i+1)_2)$ to
 $((2i+1)_1,(2i+1)_2,(2i+1)_0)$ for all $i\in [{\iota,w_1-1}]$.
\item[(S2)] In {\rm Orb}$_G A_0$, change $((2i)_0,(2i)_1,(2i)_2)$ to
 $((2i)_1,$ $(2i)_2,(2i)_0)$ for all $i\in [{\kappa,w_1-1}]$.
 \item[(S3)] In {\rm Orb}$_G A_0$, change  $(i_0,i_1,i_2)$ to
$(i_2,i_1,i_0)$ for all $i\in [{0,w_1-1}]$.
 \item[(S4)] Finally, in {\rm Orb}$_G A_{\kappa}$, change  $((2i+1)_2,(2\kappa+2i+1)_2,(\kappa+2i+1)_0)$ to
 $((2\kappa+2i+1)_2,(2i+1)_2,(\kappa+2i+1)_0)$ for all $i\in [{0,\iota-1}]$.
 At the same time, in {\rm Orb}$_G A_{w_1}$, change  $((2i+1)_2,(2i)_2,(w_1+2i+1)_0)$ to $((2i)_2,(2i+1)_2,(w_1+2i+1)_0)$ for all
 $i\in [{\kappa,w_1-1}]$. Note that $\{(2\kappa+2i+1)_2:i\in [{0,\iota-1}]\}=\{(2i+1)_2:i\in [{\kappa,w_1-1}]\}$.
\end{description}

After each step, all elements occur at least $w_1-1$ and at most $w_1+1$ times in each position. We list the elements occurring $w_1-1$ or $w_1+1$ in each position after each step in Table~\ref{abn0}.  It is routine to check that after (S4), all elements occur $w_1$ times in each position. Thus triples in $\A$ can be reordered to satisfies (T2).

\begin{table*}[h!] 
\caption{Abnormal occurrences in three positions in Proposition~\ref{packing}}\label{abn0}
\makebox[\textwidth][c]{ 
\begin{tabular*}{0.86\paperwidth}{ @{\extracolsep{\fill}}  c | c | c | c }
\hline
\text{Originally} & $1$& $2$&$3$\\
\hline
$w_1-1$&$\{(2i)_2: i\in [{0,w_1-1}]\}\cup \{(w_1)_1:i\in [{w_1,2w_1-1}]\}$  & $\{(2i+1)_2:i\in [{0,w_1-1}]\}$&$\{i_0:i\in [{0,2w_1-1}]\}$\\
\hline
$w_1+1$& $\{i_0:i\in [{0,2w_1-1}]\}$ &  $\{(w_1)_1:i\in [{w_1,2w_1-1}]\}$& $\{i_2:i\in [{0,2w_1-1}]\}$\\
\hline
\hline
\text{After (S1)}& $1$& $2$&$3$\\
\hline
$w_1-1$&$\{(2i)_2: i\in [{0,w_1-1}]\}\cup\{(2i)_1:i\in [{\kappa,w_1-1}]\}$  & $\{(2i+1)_2:i\in [{0,\iota-1}]\}$&$\{i_0:i\in [{0,w_1-1}]\}\cup \{(2i)_0:i\in [{\kappa,w_1-1}]\}$\\
\hline
$w_1+1$& $\{i_0:i\in [{0,w_1-1}]\}\cup \{(2i)_0:i\in [{\kappa,w_1-1}]\}$ &  $\{(2i)_1:i\in [{\kappa,w_1-1}]\}$& $\{i_2:i\in [{0,w_1-1}]\}\cup \{(2i)_2:i\in [{\kappa,w_1-1}]\}$\\
\hline
\hline
\text{After (S2)}& $1$& $2$&$3$\\
\hline
$w_1-1$&$\{(2i)_2: i\in [{0,w_1-1}]\}$  & $\{(2i+1)_2:i\in [{0,\iota-1}]\}$&$\{i_0:i\in [{0,w_1-1}]\}$\\
\hline
$w_1+1$& $\{i_0:i\in [{0,w_1-1}]\}$ &  $\{(2i)_2:i\in [{\kappa,w_1-1}]\}$& $\{i_2:i\in [{0,w_1-1}]\}$\\
\hline
\hline
\text{After (S3)}& $1$& $2$&$3$\\
\hline
$w_1-1$&$\{(2i)_2: i\in [{\kappa,w_1-1}]\}$  & $\{(2i+1)_2:i\in [{0,\iota-1}]\}$&$\emptyset$\\
\hline
$w_1+1$& $\{(2i+1)_2:i\in [{0,\iota-1}]\}$ &  $\{(2i)_2:i\in [{\kappa,w_1-1}]\}$& $\emptyset$\\
\hline
\end{tabular*}
}
\end{table*}
 \end{proof}
The following consequence is immediate.
 \begin{proposition}\label{gpdf3} For all positive integers $w_1$ and $n=6w_1^2+2w_1$,
 \[A_4(n,2w-1,\llbracket w_1, w_1,w_1 \rrbracket)=6w_1+2.\]
\end{proposition}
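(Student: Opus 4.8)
The plan is to derive Proposition~\ref{gpdf3} as an immediate corollary of the combinatorial machinery already developed. The target equality $A_4(6w_1^2+2w_1, 2w-1, \llbracket w_1,w_1,w_1\rrbracket) = 6w_1+2$ splits, as usual, into an upper bound and a matching construction. For the upper bound, observe that with $\vw=\llbracket w_1,w_1,w_1\rrbracket$ we have $w=3w_1$, $\lambda=3$, and $s=0$, so $n=6w_1^2+2w_1 = (\mu+1)w_1$ with $\mu=6w_1$; hence $A_4(n,2w-1,\vw)\le \lfloor n/w_1\rfloor = 6w_1+2$ directly from the Johnson bound in Proposition~\ref{johnsonbound} (or equivalently from the displayed inequality $A_q(n,2w-1,\vw)\le\lfloor n/w_1\rfloor$). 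So all the work is in the lower bound, i.e.\ exhibiting an optimal code of size $6w_1+2$.

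For the construction, I would simply invoke Proposition~\ref{packing} together with Proposition~\ref{pac2code}. Proposition~\ref{packing} supplies, for every positive integer $w_1$, a $\vw$-balanced $(6w_1+2,3)$-packing of size $6w_1^2+2w_1$ with $\vw=\llbracket w_1,w_1,w_1\rrbracket$. Feeding this into Proposition~\ref{pac2code} (which converts any $\vw$-balanced $(M,q-1)$-packing of size $n$ into an $(n,2w-1,\vw)_q$-code of size $M$) yields a $(6w_1^2+2w_1,\,6w_1-1,\,\llbracket w_1,w_1,w_1\rrbracket)_4$-code of size $6w_1+2$. Combining with the upper bound gives equality. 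In fact this is exactly what ``The following consequence is immediate'' signals: the statement is a one-line chaining of Propositions~\ref{packing} and~\ref{pac2code} against the Johnson bound, with no new ideas needed.

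There is essentially no obstacle at this stage — the heavy lifting was done in Proposition~\ref{packing}, whose proof required the careful reordering steps (S1)--(S4) on the Skolem STS$(3u)$ to repair property (T2) after puncturing. If I were writing the proof I would just say: ``The upper bound is Proposition~\ref{johnsonbound}. For the lower bound, apply Proposition~\ref{pac2code} to the $\vw$-balanced $(6w_1+2,3)$-packing of size $6w_1^2+2w_1$ constructed in Proposition~\ref{packing}.'' The only thing worth double-checking is the arithmetic identity $6w_1^2+2w_1=(\mu+1)w_1$ with $\mu=6w_1-4s=6w_1$ (since $s=0$ here), confirming that $6w_1+2=\lfloor n/w_1\rfloor$ so that the constructed code is genuinely optimal; this is the bookkeeping that makes the ``consequence'' truly immediate and also shows it exactly settles the exceptional cases $w_1\equiv 2$ or $3\pmod 4$ left open in Proposition~\ref{gpdf2}.
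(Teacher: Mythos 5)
Your proposal matches the paper's intended argument exactly: the ``immediate consequence'' is precisely the chaining of Proposition~\ref{packing} with Proposition~\ref{pac2code} for the lower bound, together with the Johnson bound $A_4(n,2w-1,\vw)\le\lfloor n/w_1\rfloor=6w_1+2$ for the upper bound. One harmless bookkeeping slip: since $s=0$ gives $\mu=6w_1$, the length $n=6w_1^2+2w_1$ equals $(\mu+2)w_1$, not $(\mu+1)w_1$ --- which is exactly why this case ($M=6w_1+2=6e_1+2$ with $e_2=0$) was the exception in Corollary~\ref{gpdf} and Proposition~\ref{gpdf2} --- but the Johnson bound still yields $\lfloor n/w_1\rfloor=6w_1+2$, so nothing in your argument breaks.
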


Combining Propositions~\ref{gpdf2} and \ref{gpdf3}, we have shown that $A_4(n,2w-1,\llbracket w_1, w_2,w_3 \rrbracket)=\lfloor\dfrac{n}{w_1}\rfloor$ for all $n= Mw_1$, where $M\geq \mu+1$. By the lengthening method (adding zeros in the end of codewords) used in \cite{Cheeetal:2010a}, we have $A_4(n,2w-1,\llbracket w_1, w_2,w_3 \rrbracket)=\lfloor\dfrac{n}{w_1}\rfloor$  for all $n\geq (\mu+1)w_1$.

\section{Constructions for $n=\mu w_1+\lceil \frac{\mu}{6}\rceil$}\label{nleql}
To determine values of $N_{ccc}(\vw)$, we still need to prove that $A_4(n,2w-1,\llbracket w_1, w_2,w_3 \rrbracket)=\lfloor\dfrac{n}{w_1}\rfloor$  for $n=\mu w_1+\lceil \frac{\mu}{6}\rceil$ if $2s\geq \lambda=3$.  From now on, we assume that $s\geq 2$, that is $2w_1\geq w_2+w_3+2$.

By  Proposition~\ref{pac2code}, we need to construct a $\vw$-balanced $(\mu,3)$-packing of size $\mu w_1+\lceil \frac{\mu}{6}\rceil$ for all $\vw=\llbracket w_1, w_2,w_3 \rrbracket$ satisfying that  $s\geq 2$.
Here we use different method from that in Proposition~\ref{packing}. We first find a candidate of $\A$ satisfying (T2), then try to modify it to satisfy (T1). We show this idea in the following example.

\begin{example}Let $M=10$ and $\vw=\llbracket 3, 2,2 \rrbracket$.
Let $A_1=(0,1,6)$, $A_2=(0,2,*)$ and $A_3=(0,*,3)$. Then $\A=\cup_{i\in [3]} ${\rm Orb}$_{\bz_{10}}A_i$ is a candidate satisfying (T2) over $\bz_{10}$. Note that the difference $5$ occurs twice in $\Delta A_1$. We first do the following changes to triples  in {\rm Orb}$_{\bz_{10}}A_1$:
\begin{align*}
(0,1,6) &\rightarrow(0,*,6),\\
(1,2,7) &\rightarrow(1,2,*),\\
(3,4,9) &\rightarrow(3,*,9),\\
(4,5,0)&\rightarrow(4,5,*),\\
(7,8,3)&\rightarrow(7,8,*).
\end{align*}
 Then add two more triples
 \begin{align*}
 &(*,1,0),\\&(*,4,3).
 \end{align*}
Finally, change the following triple in {\rm Orb}$_{\bz_{10}}A_2$
  \begin{align*}
(1,3,*)&\rightarrow(1,3,7).
 \end{align*}
Note that we do not change the positions of symbols from $\bz_{10}$ appearing in $\A$. For example, in the first triple $(0,1,6)$, the symbol $1$ in the second position disappears, but appears later in $(*,1,0)$ in the same position. Further, the pairs newly occurring  in the last two steps are pairs deleted in the first step. For example, the pair $\{0,1\}$ appears in the second step when adding $(*,1,0)$, but it was deleted before in the first step when changing $(0,1,6)$ to $(0,*,6)$. So the pair $\{0,1\}$ still occurs only once after these three steps. Thus it is easy to check that we have a set of $32$ triples satisfying both (T1) and (T2), which yields that
$A_4(32,13,\llbracket 3, 2,2 \rrbracket)=10$ and $N_{ccc}(\llbracket 3, 2,2 \rrbracket)=32$.
\end{example}

\begin{proposition}
\label{gpack}  Let $e_1\geq 0$, $e_2\geq 3$, $2|M$ and $M\geq 8$ be integers. Suppose that there exists a  GDP$(M,3^{e_1}2^{e_2})$ with three specified base blocks $\{0,1\}$, $\{0,2\}$ and $\{0,M/2-1\}$. Let $w_1=e_1+e_2-1$, $w_2$ and $w_3$ be  any integers such that $w_1\geq w_2\geq w_3$ and $w_2+w_3=2e_1+e_2$. Then there exists a $\vw$-balanced $(M,3)$-packing of size $n=M w_1+\lceil\frac{M}{6}\rceil$, where $\vw=\llbracket w_1, w_2,w_3 \rrbracket$.
\end{proposition}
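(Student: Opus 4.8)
\textbf{Proof proposal for Proposition~\ref{gpack}.}

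The plan is to imitate the mechanism illustrated in the worked example just before the statement, but in full generality. We start with a GDP$(M,3^{e_1}2^{e_2})$ whose base blocks include the three prescribed doubletons $P_1=\{0,1\}$, $P_2=\{0,2\}$ and $P_3=\{0,M/2-1\}$. For each base block $B$ we orient it into a triple $A_B\in(\bz_M\cup\{*\})^3$ exactly as in Proposition~\ref{diffpack}: triples of size three go into positions $(1,2,3)$; the size-two blocks are split so that $w_2-e_1$ of them become $(a,b,*)$ and $w_3-e_1$ become $(a,*,c)$; and for the three special blocks we fix the orientations $A_{P_1}=(0,1,*)$, $A_{P_2}=(0,2,*)$, $A_{P_3}=(0,*,M/2-1)$, say. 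Taking $\A_0=\cup_B{\rm Orb}_{\bz_M}A_B$ gives a set of $M(e_1+e_2)=M(w_1+1)$ triples which automatically satisfies (T2) for $\vw=\llbracket w_1,w_2,w_3\rrbracket$ — here one uses the counting identities $w_2+w_3=2e_1+e_2$ and $w_1=e_1+e_2-1$ — but which over-counts: it has $M$ too many triples and, because of the orbit of $P_3$, the difference $M/2-1\equiv -(M/2-1)$ is covered twice along one coordinate. The target size is $Mw_1+\lceil M/6\rceil$, so we must \emph{remove} $M-\lceil M/6\rceil$ net triples while repairing the pair-packing condition (T1) and preserving (T2).

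The surgery proceeds in three stages, mirroring the example. \textbf{Stage 1 (puncturing the bad orbit):} in the $M$ triples of ${\rm Orb}_{\bz_M}A_{P_3}$, replace roughly a $1-1/6$ fraction of them by deleting one of their two finite entries, turning $(i, *, i+M/2-1)$ into either $(i,*,*)$-type fragments; more precisely we delete entries so as to kill exactly the repeated differences and to reduce the finite-entry budget in each of the three positions by the right amount. Each such deletion removes one pair from the packing and creates a deficit of one occurrence of some symbol in some position. \textbf{Stage 2 (adding short corrector triples):} we add $\lceil M/6\rceil$ new triples, each using two of the finite entries that were orphaned in Stage 1, chosen so that the pairs they reintroduce are \emph{exactly} the pairs deleted in Stage 1 (hence (T1) is maintained) and so that they restore the missing symbol-occurrences in the right positions. \textbf{Stage 3 (local re-orientation):} a bounded number of triples in ${\rm Orb}_{\bz_M}A_{P_1}$ and ${\rm Orb}_{\bz_M}A_{P_2}$ are re-oriented — swapping which coordinate a finite entry sits in, or filling a previously-$*$ slot with a symbol whose pair is still available — to fix any residual imbalance between the three positions, again without touching pairs already used. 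One then checks that the net change in cardinality is $-(M)+\lceil M/6\rceil$ as required, that every nonzero difference is still covered at most once (so (T1) holds), and that every symbol occurs $w_1,w_2,w_3$ times in positions $1,2,3$ respectively (so (T2) holds). By Proposition~\ref{pac2code} the resulting $\vw$-balanced $(M,3)$-packing of size $Mw_1+\lceil M/6\rceil$ yields the claimed code.

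The bookkeeping for Stages 1–3 splits naturally according to $M\bmod 6$ (and, because the special block $\{0,M/2-1\}$ behaves differently when $M/2-1$ is or is not its own negative, according to the parity structure of $M/2$), so the proof will be organized as a short case analysis on $M$ modulo a small integer, with the generic case written out and the few small or exceptional residues handled by an explicit listing of the corrector triples. The hypotheses $e_2\ge 3$, $2\mid M$ and $M\ge 8$ are exactly what is needed to guarantee: three disjoint-enough doubleton base blocks to perform the re-orientations of Stage 3; an even modulus so that $M/2-1$ makes sense and the orbit of $P_3$ has the stated collision pattern; and enough room ($\lceil M/6\rceil\le$ the number of triples punctured in Stage 1) for Stages 1–2 to balance.

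\textbf{Main obstacle.} The delicate point is Stage 2: showing that the $\lceil M/6\rceil$ corrector triples can be chosen \emph{simultaneously} to (i) reuse only pairs freed in Stage 1, (ii) be mutually pair-disjoint, and (iii) deliver precisely the right deficit-correcting multiset of symbol-occurrences across the three positions. This is essentially a small edge-colouring / system-of-distinct-representatives argument on a bounded-degree auxiliary graph whose vertices are the orphaned symbols; one expects it to go through for all $M\ge 8$ with possibly a handful of genuinely exceptional small $M$ to be disposed of by hand. Everything else — verifying (T2) for the initial $\A_0$, the difference-counting for (T1) away from the surgery, and the final cardinality count — is routine given Propositions~\ref{diffpack} and~\ref{pac2code}.
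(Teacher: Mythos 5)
There is a genuine gap, and it starts with the setup. You keep all $e_1+e_2=w_1+1$ base blocks and claim that $\A_0=\cup_B{\rm Orb}_{\bz_M}A_B$ satisfies (T2): it does not, since position $1$ is filled in every one of the $w_1+1$ orbits, so every symbol occurs $w_1+1$ times there, and you have $M(w_1+1)$ columns, i.e.\ $M$ too many. Your diagnosis of the defect is also wrong: $M/2-1\not\equiv-(M/2-1)\pmod M$ (the self-negative difference is $M/2$), so in your $\A_0$ no difference is repeated at all — the pairs are already a packing, and "killing the repeated differences" in ${\rm Orb}_{\bz_M}A_{P_3}$ is vacuous. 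The actual problem you must solve is purely a counting one: reduce the column count by $M-\lceil M/6\rceil$ and the position-$1$ occurrences by one per symbol, without disturbing positions $2$ and $3$ or reusing a pair twice. Puncturing entries (your Stage 1) does not remove columns; deleting, say, the whole orbit of $P_3$ removes the right number of position-$1$ entries but leaves every symbol deficient in position $3$, and your $\lceil M/6\rceil$ corrector triples of shape $(\ast,\cdot,\cdot)$ then overload position $2$, forcing further compensation. This cascading rebalancing — which you defer to an unproven SDR/edge-colouring argument and yourself flag as the main obstacle — is exactly the content of the proposition, so the proposal does not constitute a proof.

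For comparison, the paper sidesteps the cascade at the outset: it replaces the two specified doubletons $\{0,1\}$ and $\{0,M/2-1\}$ by the single triple $B_1=\{0,1,M/2+1\}$, so that there are exactly $w_1$ base blocks, the orbit family has exactly $Mw_1$ triples and satisfies (T2) on the nose, and the only defect is that the self-paired difference $M/2$ occurs twice in $\Delta B_1$. That single, localized defect is then repaired by turning one entry of each of $M/2$ chosen triples of ${\rm Orb}_{\bz_M}A_{B_1}$ into $\ast$, adding $\lceil M/6\rceil$ new triples of type $(\ast,\cdot,\cdot)$, and filling the $\ast$ of $M/2-2\lceil M/6\rceil$ triples of ${\rm Orb}_{\bz_M}A_{B_2}$ (with $B_2=\{0,2\}$), so that the broken pairs reappear exactly once and all position counts are restored; the explicit choices are tabulated for $M\equiv0,2,4\pmod 6$. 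If you want to rescue your route, you would need to supply an equally explicit (or genuinely proved) scheme for the simultaneous deletion/addition step; as written, the key construction is missing.
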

\begin{proof} Suppose that $\B'$ is the given set of base blocks of a GDP$(M,3^{e_1}2^{e_2})$ over $\bz_M$. Let $B_1=\{0,1,M/2+1\}$, $B_2=\{0,2\}$ and \[\B=(\B'\setminus\{\{0,1\}, \{0,M/2-1\}\})\cup \{B_1\}.\]
Note that $\B$ has $e_1+e_2-1=w_1$ blocks.
Partition $\B$ into three parts $\B_1$, $\B_2$ and $\B_3$, where $\B_1$ consists of all $e_1+1$ blocks of size three,  $\B_2$ contains $w_2-e_1-1$ blocks of size two including  $B_2$ specifically, and  $\B_3$ contains the remaining $w_3-e_1-1$ blocks of size two. For each block $B\in \B_1$, let $A_B$ be an ordered triple with elements from $B$. For each block $B=\{a,b\}\in \B_2$, let $A_B=(a,b,*)$. For each block $B=\{a,c\}\in \B_3$, let $A_B=(a,*,c)$. Specifically,  let $A_{B_1}=(0,1,M/2+1)$ and $A_{B_2}=(0,2,*)$. Let $\A=\cup_{B\in \B}${\rm Orb}$_{\bz_{M}}A_B$, then $\A$  is a candidate of $M w_1$ triples over $\bz_{M}\cup\{*\}$ satisfying (T2).

Now we do modifications on triples in $\A$ to make it satisfy both (T1) and (T2).  The main idea is as follows. The set $\A$ does not satisfy (T1) since the difference $M/2$ occurs twice in $\Delta B_1$. Thus, we first choose $M/2$ triples from {\rm Orb}$_{\bz_{M}}A_{B_1}$, then change one symbol from each repeated pair to $*$. Besides the $M/2$ repeated pairs, there are $M/2$ other pairs also broken in this step. We let them appear somewhere else by adding $\lceil\frac{M}{6}\rceil$ triples of type $(*,\cdot , \cdot)$, and changing the symbol $*$ in $M/2-2\lceil\frac{M}{6}\rceil$ triples from {\rm Orb}$_{\bz_{M}}A_{B_2}$ to some symbol of $\bz_M$. Details for different congruent classes of $M$ are listed in Table~\ref{abn1}.

\begin{table*}[htbp] 
\center\caption{Modifications made on triples of $\A$ in Proposition~\ref{gpack}}\label{abn1}
\begin{tabular}{l|l|l}
\hline
\multirow{ 3}{*}{$M=6k$, $k\geq 2$}&{\rm Orb}$_{\bz_{M}}A_{B_1}$ & $(3i,3i+1,3i+3k+1)\rightarrow (3i,3i+1,*)$,  $i\in [{0,k-1}]$\\
&& $(3i+3k-1,3i+3k,3i)\rightarrow (3i+3k-1,3i+3k,*)$,  $i\in [{0,k-1}]$\\
&& $(3i+3k+1,3i+3k+2,3i+2)\rightarrow (3i+3k+1,*,3i+2)$,  $i\in [{0,k-1}]$\\
\cline{2-3}
&Add triples & $(*,3i+3k+2,3i+3k+1)$, $i\in [{0,k-1}]$\\
\cline{2-3}
&{\rm Orb}$_{\bz_{M}}A_{B_2}$ & $(3i+3k-1,3i+3k+1,*)\rightarrow (3i+3k-1,3i+3k+1,3i)$,  $i\in [{0,k-1}]$\\
\hline
\hline
\multirow{ 3}{*}{$M=6k+2$, $k\geq 1$}
&{\rm Orb}$_{\bz_{M}}A_{B_1}$ & $(3i+1,3i+2,3i+3k+3)\rightarrow (3i+1,3i+2,*)$,  $i\in [{0,k-1}]$\\
&& $(3i+3k,3i+3k+1,3i)\rightarrow (3i+3k,3i+3k+1,*)$,  $i\in [{0,k-1}]$\\
&& $(3i,3i+1,3i+3k+2)\rightarrow (3i,*,3i+3k+2)$,  $i\in [{0,k-1}]$\\
&& $(6k,6k+1,3k)\rightarrow (6k,*,3k)$\\
\cline{2-3}
&Add triples & $(*,3i+1,3i)$, $i\in [{0,k-1}]$\\
&& $(*,6k+1,6k)$\\
\cline{2-3}
&{\rm Orb}$_{\bz_{M}}A_{B_2}$ & $(3i+1,3i+3,*)\rightarrow (3i+1,3i+3,3i+3k+3)$,  $i\in [{0,k-2}]$\\
\hline
\hline
\multirow{ 3}{*}{$M=6k+4$, $k\geq 1$}
&{\rm Orb}$_{\bz_{M}}A_{B_1}$ & $(3i+1,3i+2,3i+3k+3)\rightarrow (3i+1,3i+2,*)$,  $i\in [{0,k-1}]$\\
&& $(3i+3k+1,3i+3k+2,3i)\rightarrow (3i+3k+1,3i+3k+2,*)$,  $i\in [{0,k}]$\\
&& $(3i,3i+1,3i+3k+3)\rightarrow (3i,*,3i+3k+3)$,  $i\in [{0,k}]$\\
\cline{2-3}
&Add triples & $(*,3i+1,3i)$, $i\in [{0,k-1}]$\\
\cline{2-3}
&{\rm Orb}$_{\bz_{M}}A_{B_2}$ & $(3i+1,3i+3,*)\rightarrow (3i+1,3i+3,3i+3k+4)$,  $i\in [{0,k-1}]$\\
\hline
\end{tabular}
\end{table*}
\end{proof}

By Proposition~\ref{gpack}, we need to construct a  GDP$(\mu,3^{e_1}2^{e_2})$ with three specified base blocks $\{0,1\}$, $\{0,2\}$ and $\{0,\mu/2-1\}$, where $e_1=w_1-s-1$ and $e_2=s+2$, for all $w_1>s\geq 2$.
\begin{proposition}\label{exgdp} Given integers  $w_1>s\geq 2$ and $(w_1,s)\not \in\{(4,2),(5,2)\}$, let  $e_1=w_1-s-1$ and $e_2=s+2$. Then there exists a  GDP$(\mu,3^{e_1}2^{e_2})$ over $\bz_{\mu}$ with three specified base blocks $\{0,1\}$, $\{0,2\}$ and $\{0,\mu/2-1\}$.
\end{proposition}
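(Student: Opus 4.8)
The plan is to reduce Proposition~\ref{exgdp} to a statement about partitioning a block of consecutive integers into ``difference triples'' and singletons, then to supply that partition from the theory of Skolem-, Langford- and Peltesohn-type sequences (perfect and quasi-perfect difference families), and finally to dispose of a finite list of small pairs $(w_1,s)$ by hand.

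First I would set up the reduction. Since $\mu=6w_1-4s$ is even, read each difference of $\bz_\mu$ as its representative in $[1,\mu/2]$, so that $\mu/2$ is the unique self-paired value. The prescribed pairs $\{0,1\}$, $\{0,2\}$, $\{0,\mu/2-1\}$ use up exactly the three values $1,2,\mu/2-1$; a size-two base block $\{0,x\}$ uses one value; a size-three base block uses three distinct values forming either a sum triple $\{a,b,a+b\}$ or a complementary triple $\{a,b,c\}$ with $a+b+c=\mu$ (according to whether the block wraps around $0$). Hence a GDP$(\mu,3^{e_1}2^{e_2})$ with these three prescribed pairs exists if and only if the interval $I=\{3,4,\dots,\mu/2-2\}$ can be partitioned into $e_1$ triples, each a sum triple or a $\mu$-complementary triple with all parts in $I$, together with $s-1$ singletons. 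The count is consistent: $|I|=\mu/2-4=3e_1+(s-1)$, where $e_1=w_1-s-1$ and $s-1=e_2-3$. Conversely, each sum triple $\{a,b,a+b\}$ lifts to the block $\{0,a,a+b\}$ and each complementary triple $\{a,b,c\}$ to $\{0,a,a+b\}$ with $a+b=\mu-c$, no wrap-around collision occurring because all parts lie in $I$.

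The core combinatorial step is to build that partition. I would exploit two freedoms: the choice of which $s-1$ elements of $I$ become singletons, and the two flavours of triple. Removing a well-chosen singleton at an end of $I$ reduces the triple part to a partition of an interval $\{d,d+1,\dots,d+3e_1-1\}$ of $3e_1$ consecutive integers into difference triples --- precisely the object produced by perfect and quasi-perfect difference families (Peltesohn's theorem, via Skolem and hooked Skolem sequences), after a Langford-type shift of defect~$3$ that discards the forbidden small differences $1,2$ and a hooked/near modification that discards the near-half difference $\mu/2-1$. The only genuine congruence obstruction is parity, $\sum_{\text{triple part}}x\equiv0\pmod 2$; since $|I|\ge2$ and $s\ge2$, at least one singleton can be chosen to correct it. I would therefore organise the argument by the residues of $w_1$ and $s$ that govern the shape of the shifted interval, mirroring the way Corollary~\ref{gpdf} split on $e_1\bmod 4$, and read off the base blocks $\{0,a,a+b\}$ or $\{0,a,\mu-b\}$ from the triples.

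I expect the main obstacle to be the regime where both $e_1=w_1-s-1$ and $s$ are small: there $I$ is too short to invoke the asymptotic Skolem/Langford machinery, there are essentially no spare singletons with which to repair parity, and the forced deletion of the differences $1,2,\mu/2-1$ leaves almost no room --- which is exactly why $(w_1,s)\in\{(4,2),(5,2)\}$ are true exceptions (one checks directly that $\{3,4,5,6\}$, and each admissible subset of $\{3,\dots,9\}$, contains no usable sum or complementary triple). I would therefore isolate a finite list of small pairs $(w_1,s)$ not reached by the generic construction and settle each by an explicit list of base blocks, in the style of Example~\ref{ext1t2} and Table~\ref{abn1}. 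Verifying that every remaining $(w_1,s)$ with $w_1>s\ge2$ outside $\{(4,2),(5,2)\}$ is indeed covered --- with the correct parity and no wrap-around collision --- is routine but lengthy bookkeeping, and that is where most of the write-up would go.
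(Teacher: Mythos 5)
Your reduction is correct (and the count $|I|=\mu/2-4=3e_1+(s-1)$ checks out), but it is only a restatement of the problem: a GDP$(\mu,3^{e_1}2^{e_2})$ containing $\{0,1\},\{0,2\},\{0,\mu/2-1\}$ is the same thing as a partition of $I=\{3,\dots,\mu/2-2\}$ into $e_1$ sum or $\mu$-complementary triples plus $s-1$ singletons, and exhibiting such a partition for \emph{every} admissible $(w_1,s)$ is the entire content of the proposition. That is precisely what you defer to ``Skolem/Langford/Peltesohn machinery'' plus ``routine but lengthy bookkeeping'': you never state the auxiliary existence theorem you intend to invoke (for defect-$3$ Langford and hooked Langford sequences this requires order at least $5$ together with congruence conditions on $e_1$), never check that its hypotheses cover all residue classes, and never show how the $s-1$ singletons can be placed so that what remains is a set the machinery actually partitions --- the delicate case being $s=2$, where there is a single singleton and no slack to repair parity or a hook, and which is exactly the regime the paper's explicit lists are built for. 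Moreover, the parameters missed by a defect-$3$ Langford-type construction are not ``a finite list of small pairs'': they include every pair with $w_1-s\le 5$ (i.e.\ $e_1\le 4$) and $s$ arbitrary, an infinite family that needs its own explicit (if easy) treatment exploiting the abundant singletons. The paper's proof consists of exactly this missing material: explicit base blocks in the four classes $w_1-s\equiv 0,1,2,3\pmod 4$ (adapted from the Dinitz--Rodney disjoint difference families), with the small values of $k$ done by hand. So the route you sketch is viable, but as written there is a genuine gap where the construction should be.

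A concrete warning about the checks you postpone: your parenthetical justification of the excluded pairs is wrong for $(5,2)$. There $\mu=22$, and the triples $\{3,4,7\}$ (sum) and $\{5,8,9\}$ (complementary, $5+8+9=22$) partition $\{3,\dots,9\}\setminus\{6\}$, giving the base blocks $\{0,3,7\}$, $\{0,5,13\}$, $\{0,1\}$, $\{0,2\}$, $\{0,10\}$, $\{0,6\}$, which do form a GDP$(22,3^2 2^4)$ with the three specified pairs; so $\{3,\dots,9\}$ does contain usable triples, and $(5,2)$ is excluded by the paper rather than genuinely infeasible. Only $(4,2)$ is truly obstructed ($\{3,4,5,6\}$ contains no sum triple and no triple summing to $16$). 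This does not affect what you are required to prove, since $(5,2)$ is excluded from the statement, but it shows that the ``one checks directly'' steps are exactly where the content --- and the errors --- lie, and they cannot be left as a promise.
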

\begin{proof}
We split it into four cases based on the values of $w_1-s$.  For all cases, the $e_2$ base blocks of size two are of type $\{0,d\}$, where $d$ covers the values of all differences that do not appear in the list of differences from base blocks of size three. To save space, we only list the $e_1$ base blocks of size three in each case. Note that the differences $1$, $2$, $\mu/2$ and $\mu/2-1$ do not appear in any base block of size three. Thus the GDP contains the three mentioned base blocks of size two. Note that in the first two cases, that is when $w_1-s\equiv 0 \text{ or }1\pmod 4$, the set of base blocks of size three are obtained by modifying some blocks of a DF$(\mu,3)$ in \cite{dinitz1997disjoint}.

When $w_1-s=4k$, then $\mu=24k+2s$ and $e_1=4k-1$. If $k\geq 2$, then use $e_1$ base blocks of size three as below.
\[\begin{array}{llll}
\{0,&6k-1,&18k+2s-1\},&\\
 \{0,& 4k -1 ,& 9k- 1\},&\\
 \{0,&  2k ,& 10k - 1\},& \\
 \{0 ,& 4k ,& 10k\},& \\
 \{0 ,& 2k + 2r - 1 ,& 7k + r - 1\},&  r \in[1, k -1],\\
 \{0,&  2k + 2r,&  11k + r - 1\},&  r \in[1, k - 1],\\
 \{0 ,& 2r + 1 ,& 10k + r \},& r \in[1, k - 1],\\
 \{0 ,& 2r ,& 6k + r \},& r \in[2, k - 1].\\
\end{array}
\]
If $k=1$, and $s=2$, that is $\mu=28$, then three base blocks are $ \{0,3,11\}, \{0,4,9\},\{0,6,16\}$. If $s\geq 3$, then three base blocks are $\{0,3,13\},\{0,4,12\},\{0,5,11\}$.

When $w_1-s=4k+1$, then $\mu=24k+2s+6$ and $e_1=4k$. If $k\geq 2$, then the $e_1$ base blocks of size three are
\[\begin{array}{llll}
\{0,&6k,&18k+2s+4\},&\\
 \{0,& 4k - 1,& 9k\},&\\
 \{0 ,&4k,& 10k + 1\},&\\
 \{0,& 4k + 1,& 12k + 1\},&\\
 \{0,& 2k,& 12k\},&\\
 \{0,& 2k + 2r - 1,& 7k + r\},& r \in[1, k - 1],\\
 \{0,& 2k + 2r,& 11k + r\},& r \in[1, k - 1],\\
 \{0,& 2r + 1,& 10k + r + 1\},& r \in[1, k - 1],\\
 \{0,& 2r,& 6k + r + 1\},& r \in[2, k - 1].\\
\end{array}
\]
If $k=1$, and  $s\geq 2$, then four base blocks are $\{0,3,14\},\{0,4,12\},\{0,5,15\},\{0,6,13\}$.

When $w_1-s=4k+2$, then $\mu=24k+2s+12$ and $e_1=4k+1$. If $k\geq 1$, then the $e_1$ base blocks are
\[\begin{array}{llll}
\{0,&4k+1,&10k+6\},&\\
 \{0,& 4k +3,& 10k+7\},&\\
 \{0,&  2r ,& 6k + r+5\},& r \in[2, 2k+ 1],\\
 \{0,&  2r+1,& 10k + r+7\},& r \in[1, k],\\
 \{0,& 2k+2r + 1,& 11k + r + 7\},& r \in[1, k - 1].\\
\end{array}
\]
If $k=0$, and  $s\geq 3$, then let $\{0,3,7\}$ be the only base block of size three. 

When $w_1-s=4k+3$, then $\mu=24k+2s+18$ and $e_1=4k+2$. If $k\geq 1$, then the $e_1$ base blocks are
\[\begin{array}{llll}
\{0,&4k+1,&10k+9\},&\\
 \{0,& 4k +3,& 10k+10\},&\\
  \{0,& 4k +4,& 8k+9\},&\\
 \{0,&  2r ,& 6k + r+7\},& r \in[2, 2k+ 1],\\
 \{0,&  2r+1,& 10k + r+10\},& r \in[1, k-1], \\
 \{0,& 2k+2r - 1,& 11k + r + 9\},& r \in[1, k].\\
\end{array}
\]
If $k=0$, and  $s\geq 3$, then two base blocks are $\{0,3,8\},\{0,4,10\}$. 
\end{proof}

Combining Propositions~\ref{pac2code},~\ref{gpack} and \ref{exgdp}, we obtain the following result.
\begin{proposition}\label{nless}Given a composition $\vw=\llbracket w_1, w_2,w_3 \rrbracket$ such that $2\leq s<w_1$, where $s=3w_1-w$.  We have
\[A_4(\mu w_1+\lceil \frac{\mu}{6}\rceil,2w-1,\llbracket w_1, w_2,w_3 \rrbracket)=\mu,\] for all $w_1\geq w_2\geq w_3\geq 1$ such that  $(w_1,s)\not \in\{(4,2),(5,2)\}$.
\end{proposition}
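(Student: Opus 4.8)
The statement to be proved, Proposition~\ref{nless}, is essentially a bookkeeping assembly of the three ingredients developed in this section. The plan is to chain together Proposition~\ref{pac2code}, Proposition~\ref{gpack}, and Proposition~\ref{exgdp} and then verify that the numerical parameters line up. First I would fix a composition $\vw=\llbracket w_1,w_2,w_3\rrbracket$ with $2\le s<w_1$ and $(w_1,s)\notin\{(4,2),(5,2)\}$, and recall that in the quaternary setting under discussion we have $\mu=6w_1-4s$, $\lambda=3$, and $s=3w_1-w=2w_1-w_2-w_3$; in particular $\mu$ is even. Set $e_1=w_1-s-1$ and $e_2=s+2$, so that $e_1+e_2-1=w_1$, $2e_1+e_2=2w_1-s=w_2+w_3$, $e_2=s+2\ge 4\ge 3$, and $6e_1+2e_2 = 6w_1-4s = \mu$. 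The constraint $e_1\ge 0$ is exactly $w_1\ge s+1$, which holds since $s<w_1$. I should also note $\mu = 6w_1-4s \ge 6(s+1)-4s = 2s+6 \ge 10 \ge 8$, so the hypothesis $M=\mu\ge 8$ of Proposition~\ref{gpack} is met, and $2\mid\mu$.

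Next I would invoke Proposition~\ref{exgdp} with these $w_1$ and $s$: since $(w_1,s)\notin\{(4,2),(5,2)\}$, it produces a GDP$(\mu,3^{e_1}2^{e_2})$ over $\bz_\mu$ containing the three specified base blocks $\{0,1\}$, $\{0,2\}$, and $\{0,\mu/2-1\}$. Feeding this into Proposition~\ref{gpack} (whose hypotheses $e_1\ge 0$, $e_2\ge 3$, $2\mid M$, $M\ge 8$ I have just checked, with $M=\mu$) yields a $\vw$-balanced $(\mu,3)$-packing of size $\mu w_1+\lceil\mu/6\rceil$, where $w_2,w_3$ are any integers with $w_1\ge w_2\ge w_3$ and $w_2+w_3=2e_1+e_2$ — and the latter equals $w_2+w_3$ for our original composition, so the construction applies verbatim to $\vw$. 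Then Proposition~\ref{pac2code} converts this packing into an $(n,2w-1,\vw)_4$-code of size $\mu$ with $n=\mu w_1+\lceil\mu/6\rceil$, giving $A_4(n,2w-1,\vw)\ge\mu$. For the reverse inequality, $n=\mu w_1+\lceil\mu/6\rceil$ satisfies $\mu w_1 < n < (\mu+1)w_1$ (using $1\le\lceil\mu/6\rceil<w_1$, which follows from $s\ge 2$ as noted in Section~\ref{combcons}), so the Johnson bound of Proposition~\ref{johnsonbound} gives $A_4(n,2w-1,\vw)\le\lfloor n/w_1\rfloor=\mu$. Hence equality.

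The routine part is the parameter arithmetic; the only place where anything could go wrong is the boundary behaviour of the floors and ceilings, and the small cases. Concretely, I expect the main obstacle to be confirming that $\lceil\mu/6\rceil = \lceil (6w_1-4s)/6\rceil$ is consistent with the quantity $\lceil\mu/(\lambda(\lambda-1))\rceil = w_1 - \lfloor 2s/\lambda\rfloor = w_1-\lfloor 2s/3\rfloor$ appearing in Proposition~\ref{boundccc} and in $S(\vw)$, i.e. that $\mu w_1 + \lceil\mu/6\rceil$ is indeed the target length $(\mu+1)w_1-\lfloor 2s/3\rfloor$; this is a short check that $\lceil(6w_1-4s)/6\rceil = w_1-\lfloor 2s/3\rfloor$, true for all $s$. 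One must also carry along the excluded pairs $(w_1,s)\in\{(4,2),(5,2)\}$ honestly, since these are genuinely absent from Proposition~\ref{exgdp} and therefore from the conclusion; they correspond (after accounting for refinements) to the finite exceptional compositions flagged in the Main Theorem. No deeper idea is needed here — the content of the section lives in Propositions~\ref{gpack} and~\ref{exgdp}, and this proposition is their corollary.
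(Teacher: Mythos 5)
Your proposal is correct and matches the paper's own proof, which likewise obtains the result by combining Propositions~\ref{pac2code}, \ref{gpack} and \ref{exgdp}; your explicit verification of the parameter arithmetic ($e_1=w_1-s-1$, $e_2=s+2$, $M=\mu\ge 8$, $\lceil\mu/6\rceil=w_1-\lfloor 2s/3\rfloor<w_1$) and of the matching Johnson upper bound only spells out what the paper leaves implicit.
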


By Propositions~\ref{gpdf2}, \ref{gpdf3}, \ref{nless} and the lengthening method, we have determined the value of $N_{ccc}(\vw)$ for almost all compositions with three components. We state it in the following proposition.
\begin{proposition}\label{nccc4}Given a composition $\vw=\llbracket w_1, w_2,w_3 \rrbracket$ such that $w_1< w_2+ w_3$. We have
\[N_{ccc}(\vw)=6w_1^2-4sw_1+w_1-\lfloor\frac{2s}{3}\rfloor,\]
where $s=3w_1-w$, except possibly when $\vw\in \{\llbracket 4,4,2 \rrbracket, \llbracket 4,3,3 \rrbracket, \llbracket 5,5,3 \rrbracket, \llbracket 5,4,4 \rrbracket\}$.
\end{proposition}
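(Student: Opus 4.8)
The plan is to assemble Proposition~\ref{nccc4} directly from the pieces already in place, treating the two regimes of $n$ (multiples of $w_1$, and the single sub-multiple length $\mu w_1+\lceil\frac{\mu}{6}\rceil$) together with the lower bound and a monotonicity/lengthening argument. First I would recall that, under the standing hypothesis $w_1<w_2+w_3$, we are in the quaternary setting with $\lambda=3$, $s=3w_1-w$, $\mu=6w_1-4s$, so Proposition~\ref{boundccc} already gives the lower bound $N_{ccc}(\vw)\ge (\mu+1)w_1-\lfloor\frac{2s}{3}\rfloor=6w_1^2-4sw_1+w_1-\lfloor\frac{2s}{3}\rfloor$. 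It therefore suffices to prove the matching upper bound, i.e.\ that $A_4(n,2w-1,\vw)=\lfloor\frac{n}{w_1}\rfloor$ for all $n\ge 6w_1^2-4sw_1+w_1-\lfloor\frac{2s}{3}\rfloor$.

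The second step is a case split on whether $2s<3$ or $2s\ge 3$, matching the definition of $S(\vw)$. If $s\le 1$ then $\lfloor\frac{2s}{3}\rfloor=0$ and the target threshold is exactly $(\mu+1)w_1$; by Propositions~\ref{gpdf2} and~\ref{gpdf3} (which together cover all $n=Mw_1$ with $M\ge\mu+1$, including the $\llbracket w_1,w_1,w_1\rrbracket$ exceptions) plus the lengthening method of~\cite{Cheeetal:2010a} to fill in non-multiples of $w_1$ above $(\mu+1)w_1$, the upper bound holds for every $n\ge(\mu+1)w_1$, and we are done. If $s\ge 2$ then in addition we must handle the one extra length $n_0=\mu w_1+\lceil\frac{\mu}{6}\rceil=(\mu+1)w_1-\lfloor\frac{2s}{3}\rfloor$, which is exactly the content of Proposition~\ref{nless}; this is valid provided $(w_1,s)\notin\{(4,2),(5,2)\}$. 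I would then note that $(w_1,s)=(4,2)$ forces $\llbracket w_1,w_2,w_3\rrbracket$ with $w_2+w_3=6$ and $w_1=4$, i.e.\ $\vw\in\{\llbracket 4,4,2\rrbracket,\llbracket 4,3,3\rrbracket\}$, and $(w_1,s)=(5,2)$ forces $w_2+w_3=8$, $w_1=5$, i.e.\ $\vw\in\{\llbracket 5,5,3\rrbracket,\llbracket 5,4,4\rrbracket\}$; these four compositions are precisely the stated exceptions.

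The third step is to glue the length $n_0$ to all larger lengths. Since $A_4(n,2w-1,\vw)\le\lfloor\frac{n}{w_1}\rfloor$ always, and since $\lfloor\frac{n}{w_1}\rfloor$ takes the value $\mu$ on the whole interval $[\mu w_1,(\mu+1)w_1-1]$, the code of size $\mu$ and length $n_0$ from Proposition~\ref{nless} is optimal for every $n$ with $n_0\le n<(\mu+1)w_1$ after appending $n-n_0$ zero coordinates (the lengthening operation preserves composition $\vw$ and distance $2w-1$ by conditions (C1)--(C2)). Combined with the $n\ge(\mu+1)w_1$ case handled above, this establishes $A_4(n,2w-1,\vw)=\lfloor\frac{n}{w_1}\rfloor$ for all $n\ge n_0$, hence $N_{ccc}(\vw)\le n_0$; together with the lower bound this gives equality.

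The main obstacle is not in this assembly step, which is essentially bookkeeping, but in correctly identifying that the four exceptional compositions come exactly from the two excluded $(w_1,s)$ pairs of Proposition~\ref{exgdp}/\ref{nless} — one must check that no \emph{other} composition with $w_1<w_2+w_3$ reduces to $(w_1,s)\in\{(4,2),(5,2)\}$, and that compositions with $w_1\ge w_2+w_3$ are already disposed of (they are refinements of a two-component composition, so Lemma~\ref{refine1} and Proposition~\ref{nccc3} apply and they satisfy the formula, consistent with the hypothesis of this proposition restricting to $w_1<w_2+w_3$). A secondary subtlety is making sure the lengthening argument is invoked only for $n$ strictly between consecutive multiples of $w_1$ and never needs to decrease length, which is why having the exact optimal code at the minimal length $n_0$ in each interval is essential.
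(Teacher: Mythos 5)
Your proposal is correct and follows essentially the same route as the paper: the lower bound comes from Proposition~\ref{boundccc}, the upper bound from Propositions~\ref{gpdf2} and~\ref{gpdf3} at multiples of $w_1$, Proposition~\ref{nless} at the extra length $\mu w_1+\lceil\frac{\mu}{6}\rceil$ when $s\ge 2$, and the lengthening (zero-appending) method to fill in the remaining lengths, with the four exceptional compositions arising exactly from the excluded pairs $(w_1,s)\in\{(4,2),(5,2)\}$. Your write-up simply makes explicit the bookkeeping (the identity $\mu w_1+\lceil\frac{\mu}{6}\rceil=(\mu+1)w_1-\lfloor\frac{2s}{3}\rfloor$ and the enumeration of compositions with those $(w_1,s)$) that the paper leaves implicit.
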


\section{Conclusion}

New direct constructions for optimal quaternary constant-composition codes have been given based on combinatorial methods. Consequently, we determine the values of $N_{ccc}(\vw)$, the smallest length $n$ such that $A_4(n',2w-1,\vw)=\lfloor\dfrac{n'}{w_1}\rfloor$ for all $n'\geq n$, with only four possible exceptions. The exact values of $N_{ccc}(\vw)$ show that our newly established lower bound of $N_{ccc}(\vw)$ is tight in these cases. Our main result, Main Theorem~\ref{main} follows from Propositions~\ref{boundccc}, \ref{nccc3}, \ref{nccc4} and the refining method in Lemma~\ref{refine1}.

\bibliographystyle{IEEEtran}



\end{document}